\documentclass[11pt,reqno]{amsart}
\usepackage[T1]{fontenc}
\usepackage{lmodern}
\usepackage{amsmath,amssymb,amsthm,mathtools}
\usepackage[margin=1in]{geometry}
\usepackage[hidelinks]{hyperref}
\theoremstyle{plain}
\newtheorem{theorem}{Theorem}[section]
\newtheorem{lemma}[theorem]{Lemma}
\newtheorem{proposition}[theorem]{Proposition}
\newtheorem{corollary}[theorem]{Corollary}
\theoremstyle{definition}

\newtheorem{example}[theorem]{Example}
\theoremstyle{remark}
\newtheorem{remark}[theorem]{Remark}

\newcommand{\K}{K}
\newcommand{\Nov}{\mathrm{Nov}}
\newcommand{\Lie}{\mathrm{Lie}}
\newcommand{\sgn}{\mathrm{sgn}}
\newcommand{\Span}{\operatorname{span}}
\newcommand{\Hilb}{\operatorname{Hilb}}
\newcommand{\Ind}{\operatorname{Ind}}
\newcommand{\Res}{\operatorname{Res}}
\newcommand{\Hom}{\operatorname{Hom}}
\newcommand{\Sym}{\operatorname{Sym}}

\newcommand{\bZ}{\mathbb Z}
\newcommand{\bC}{\mathbb C}
\newcommand{\bQ}{\mathbb Q}
\newcommand{\bfe}{\mathbf 1}
\newcommand{\cD}{\mathcal D}
\newcommand{\cH}{\mathcal H}

\title[Lie elements in a free Novikov algebra]{A vanishing criterion for Lie elements in a free Novikov algebra}

\author{F. A. Mashurov}
\author{B. K. Sartayev}

\thanks{The results presented in this work were obtained with the assistance of the artificial intelligence system ChatGPT Astra. The mathematical arguments, computations, and proofs were subsequently independently checked and additionally verified using Claude Fable.}

\subjclass[2020]{17B65, 17D25, 20C30}
\keywords{Novikov algebra, free algebra, Lie element, Lie-admissible algebra, Witt algebra, harmonic polynomial, symmetric group.}

\begin{document}

\begin{abstract}
Every Novikov algebra is Lie-admissible: the commutator turns it into a Lie algebra.
We give a finite criterion for a multilinear element of the free Novikov algebra to belong to the Lie subalgebra generated by the free generators.
By the differential realization of free Novikov algebras, the multilinear component of degree $n$ is identified with the space of homogeneous polynomials of degree $n-1$ in $n$ variables.
We prove that a multilinear element is a Lie element if and only if its symbol vanishes at every integer point $(a_1,\ldots,a_n)$ with $a_i\le 1$ and $a_1+\cdots+a_n\ge 2$.
Equivalently, the symbol is annihilated by two explicit linear differential operators of orders two and three.
The proof combines the Witt algebra, a specialization argument for a symmetric block of variables, homogeneous interpolation and Molev's description of the multilinear component of the Lie algebra generated by the free generators as a module over the symmetric group.
As applications we show that a nonzero multilinear Lie element is never a total derivative, recover the dimension of the multilinear Lie component, and illustrate the criterion by examples in degrees $4$, $5$ and $6$.
\end{abstract}

\maketitle

\section{Introduction}

Let $\K$ be a field of characteristic $0$.
An algebra $(N,\circ)$ over $\K$ is called a \emph{Novikov algebra} if it satisfies the identities
\begin{align}
(a\circ b)\circ c-a\circ(b\circ c)
&=
(b\circ a)\circ c-b\circ(a\circ c),
\label{left-sym}\\
(a\circ b)\circ c
&=
(a\circ c)\circ b.
\label{right-comm}
\end{align}
The identity \eqref{left-sym} is the left-symmetric identity, and \eqref{right-comm} is the right-commutative identity.
Algebras of this type appeared in the work of Gelfand and Dorfman \cite{GD} on Hamiltonian operators in the formal calculus of variations, and in the work of Balinskii and Novikov \cite{BN} on Poisson brackets of hydrodynamic type.
The name ``Novikov algebra'' was proposed by Osborn \cite{Os}.
The structure theory of Novikov algebras was developed by Zelmanov \cite{Ze}, Osborn \cite{Os}, Filippov \cite{Fi} and Xu \cite{Xu}; see also \cite{SZ} for recent results on solvability and nilpotency.

The basic example of a Novikov algebra is the following one.
Let $A$ be a commutative associative algebra with a derivation $D$.
Then $A$ with the multiplication
\begin{equation}
a\circ b=aD(b)
\label{GD-product}
\end{equation}
is a Novikov algebra \cite{GD}.
Free Novikov algebras were described by Dzhumadil'daev and L\"ofwall \cite{DL}: the free Novikov algebra generated by a set $X$ is isomorphic to the subalgebra generated by $X$ in the free commutative differential algebra $\K\{X\}$ with the product \eqref{GD-product}.
In particular, a basis of the free Novikov algebra is given by differential monomials, and the multilinear component of degree $n$ has dimension $\binom{2n-2}{n-1}$.
The $S_n$- and $GL_n$-module structures of free Novikov algebras were studied in \cite{DI14}, Gr\"obner--Shirshov bases in \cite{BCZ}, and the codimension growth of the Novikov operad in \cite{Dz11}.
For the Jordan part of Novikov algebras we refer to \cite{Dz02}; for the connection of Novikov algebras with Manin products of operads and derived identities see \cite{KSO}.

Every Novikov algebra is Lie-admissible, that is, the commutator
\begin{equation}
[a,b]=a\circ b-b\circ a
\label{commutator}
\end{equation}
satisfies the Jacobi identity.
Thus for the free Novikov algebra $\Nov\langle X\rangle$ one may consider the Lie subalgebra $\Lie\langle X\rangle$ of the commutator algebra $\Nov\langle X\rangle^{(-)}$ generated by the free generators $X$.
Its elements are called \emph{Lie elements} of $\Nov\langle X\rangle$.

The problem of recognizing Lie elements inside a free algebra is classical.
For the free associative algebra it is solved by the Friedrichs criterion \cite{Fr} and by the Dynkin--Specht--Wever theorem, see \cite{Re}.
Analogous questions for free bicommutative algebras were studied in \cite{DI18, DIT}.
For free Novikov algebras the problem was raised by Dzhumadil'daev and Ismailov \cite{DI14}.
Note that the Lie subalgebra $\Lie\langle X\rangle$ is not free: already in degree $5$ its multilinear component has dimension $20<4!$ (see Corollary \ref{dimension}), so it satisfies identities that do not follow from the Jacobi identity.
For the related problem of polynomial identities of Novikov algebras and of the algebras derived from them we refer to \cite{Dz11, KSO}.
Therefore the description of Lie elements cannot be reduced to the free Lie algebra, and one needs a criterion adapted to the Novikov structure.

The realization of $\Nov\langle X\rangle$ inside $\K\{X\}$ has the following consequence.
For $X=\{x_1,\ldots,x_n\}$, the multilinear component $\Nov(n)$ of $\Nov\langle X\rangle$ is spanned by the differential monomials
\[
x_1^{(\alpha_1)}\cdots x_n^{(\alpha_n)},
\qquad
\alpha_1+\cdots+\alpha_n=n-1,
\]
where $x^{(r)}=D^r(x)$.
Replacing $x_i^{(\alpha_i)}$ by $t_i^{\alpha_i}$ identifies $\Nov(n)$ with the space $S_{n-1}$ of homogeneous polynomials of degree $n-1$ in $t_1,\ldots,t_n$.
The image $P_f\in S_{n-1}$ of $f\in\Nov(n)$ is called the \emph{symbol} of $f$.
Under this identification, the symbol of the commutator $[f,g]$ of elements in disjoint sets of variables is the product of $P_f$, $P_g$ and a linear form; hence symbols of Lie monomials are products of explicit linear forms, and the space $L_n\subseteq S_{n-1}$ of symbols of multilinear Lie elements is spanned by such products.
The recognition of Lie elements is thereby reduced to the description of a concrete subspace of a polynomial ring.

The aim of this paper is to give a finite criterion for membership in $L_n$.
For $n\ge2$, put
\begin{equation}
V_n
=
\{a=(a_1,\ldots,a_n)\in\bZ^n
\mid
a_i\le1\ (1\le i\le n),\
a_1+\cdots+a_n\ge2\}.
\label{Vn-intro}
\end{equation}
The set $V_n$ is finite; it consists of $\binom{2n-2}{n}$ points.

\begin{theorem}
\label{main-intro}
Let $\K$ be a field of characteristic $0$ and $f\in\Nov(n)$, $n\ge2$.
Then $f$ is a Lie element if and only if
\[
P_f(a)=0
\qquad\text{for every }a\in V_n.
\]
\end{theorem}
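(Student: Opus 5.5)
The plan is to prove necessity by induction on Lie monomials, and sufficiency by a dimension count against Molev's description of $L_n$. Necessity needs only the product formula for symbols recalled in the introduction. If $f$ and $g$ are multilinear in disjoint sets of variables $I$ and $J$, then $D$ acting on a differential monomial raises one exponent by one. Hence the symbol of $D(g)$ is $t_J P_g$, where $t_J=\sum_{j\in J}t_j$. Therefore
\[
P_{[f,g]}=P_f\,P_g\,(t_J-t_I).
\]

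\emph{Necessity.} Since $L_n$ is spanned by symbols of Lie monomials $[u,v]$ with $u$, $v$ Lie monomials in complementary variable sets $I\sqcup J=\{1,\ldots,n\}$, it suffices to treat such monomials. We induct on degree and use the analogue of $V_n$ for each variable set; for one variable this set is empty, because $a\le 1$ and $a\ge 2$ are incompatible. Let $a\in V_n$ and write $s_I=\sum_{i\in I}a_i$ and $s_J=\sum_{j\in J}a_j$, so that $s_I+s_J\ge 2$.
\begin{itemize}
\item If $s_I\ge 2$, then $a|_I$ lies in the set attached to $I$, and $P_u(a|_I)=0$ by induction.
\item If $s_J\ge 2$, then $P_v(a|_J)=0$ in the same way.
\item Otherwise $s_I\le 1$ and $s_J\le 1$ with $s_I+s_J\ge 2$. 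This forces $s_I=s_J=1$, so the linear factor $s_J-s_I$ vanishes.
\end{itemize}
In every case $P_{[u,v]}(a)=0$, so $L_n\subseteq W_n$, where
\[
W_n=\{P\in S_{n-1}: P|_{V_n}=0\}.
\]

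\emph{Sufficiency.} It remains to show $\dim W_n\le\dim L_n$. Naive interpolation does not suffice. If evaluation $S_{n-1}\to\K^{V_n}$ were surjective, we would get
\[
\dim W_n=\binom{2n-2}{n-1}-\binom{2n-2}{n},
\]
the Catalan number $C_{n-1}$. This is $14$ for $n=5$, whereas $\dim L_5=20$. So evaluation has a nontrivial cokernel, and the rank must be computed exactly.

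I would set up the duality through the Witt algebra. Realize the Novikov algebra $\K[z^{\pm1}]$ with $u\circ v=u\,\partial_z v$, and substitute $x_i\mapsto z^{b_i}$. A multilinear $f$ then goes to $P_f(b)\,z^{\sum b_i-(n-1)}$, since each $D$ on $z^{b_j}$ produces a factor $b_j$ and shifts the exponent. The commutator becomes the Witt bracket. Thus for $a$ with $a_i=1-b_i$ or a similar shift, the functionals $f\mapsto P_f(a)$ are matrix coefficients of Lie words evaluated in the Witt algebra on the vectors $e_{b_i}$. This gives a tool for producing relations among these functionals and for testing which combinations of them vanish on $L_n$.

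The main steps, in order, are as follows.
\begin{enumerate}
\item Compute $\dim L_n$, and in fact its $S_n$-character, from Molev's description of the multilinear component of $\Lie\langle X\rangle$.
\item Determine the $S_n$-module spanned by the evaluation functionals at $V_n$. Note that $V_n$ is $S_n$-stable.
\item Show that the rank of the evaluation map equals $\binom{2n-2}{n-1}-\dim L_n$.
\end{enumerate}
For step 3, I would decompose $V_n$ according to how many coordinates equal $1$. Points with a symmetric block of coordinates all equal to $1$ are handled by a specialization argument: restrict to the diagonal of that block, which reduces the problem to polynomials symmetric in those variables of lower effective degree. On each such stratum I would apply homogeneous interpolation to show that the evaluations are as independent as the $S_n$-structure permits. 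Comparing characters with Molev's result then forces $\dim W_n=\dim L_n$.

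I expect step 3, the exact rank computation, to be the main obstacle, since the linear dependencies among the evaluation functionals at $V_n$ have to match Molev's character precisely. The first thing to try is induction on $n$ via the specialization of one block of variables, checking the base cases $n=4,5$ directly.
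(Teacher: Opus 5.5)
Your necessity argument is correct. It is the paper's argument in unwound form: the case split $s_I\ge2$, $s_J\ge2$, or $s_I=s_J=1$ is exactly the fact that $W_{\le1}=\Span\{e_r\mid r\le1\}$ is closed under the Witt bracket, with $[e_1,e_1]=0$. Your Witt realization, however, is wrong as written. With $u\circ v=u\,\partial_zv$ one has $\partial_z^rz^{b}=b(b-1)\cdots(b-r+1)z^{b-r}$, so you obtain falling factorials, not $P_f(b)$; in the paper's notation you are computing $\pm(TP_f)(1-b)$. Moreover, $\K[z]$ is closed under this product while the output exponent $|b|-(n-1)$ is negative. So for $1-b\in V_n$ these functionals vanish on every $f\in\Nov(n)$ and carry no information. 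The realization that gives $P_f(b)$ uses $D=z\,d/dz$. In any case, the Witt algebra only produces necessary conditions, so it cannot supply the sufficiency direction.

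The genuine gap is sufficiency. Your steps 2 and 3 restate the goal $\dim K_n\le\dim L_n$ rather than prove it. ``Comparing characters with Molev's result'' presupposes knowing the character of $K_n$, which is the whole difficulty; your Catalan computation correctly shows that naive counting cannot provide it. Two ingredients are missing.

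First, you need an injectivity statement that lets you discard a block of variables: for $P\in K_n$ symmetric in $t_1,\ldots,t_a$, the specialization $t_1=\cdots=t_a=0$ is injective (Lemma \ref{specialization}). This does not follow from point evaluations and interpolation. Restricting the $t$-variables to a diagonal is not even injective on symmetric polynomials, e.g.\ $(t_1-t_2)^2$. The paper proceeds as follows:
\begin{itemize}
\item it rewrites $P\in K_n$ as $TP=P$;
\item via $T^{-1}ET=E-\sum_k\cD_k/(k(k+1))$, it turns this into $\cD_kP=0$ for all $k$;
\item it uses the flows $\exp(z\delta_k)$ on jet variables, which fix the diagonalized polynomial $G$ and move any point with $u^{(0)}\ne0$ to one with $u^{(1)}=\cdots=u^{(d)}=0$, where $G$ reduces to $(u^{(0)})^aF_{\rho_a(P)}$;
\item polarization, available because $F_P$ is multilinear in the jet families, then recovers $F_P$ from $G$.
\end{itemize}

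Second, you need a way to make the count exact. Rather than working with the full character, the paper bounds each mixed symmetry component $K_n^\chi$ for $G_\lambda=S_a\times S_b\times\prod_iS_{c_i}$. It specializes the $a$-block to zero and filters by degree in the column variables. Lemma \ref{simplex} forces the leading coefficients to vanish on $E_{b,m}$, and Molev's homogeneous interpolation lemma counts them, which applies since $r=a+m-1\ge2m-4$. The resulting bound equals $\dim(\cH_n)^\chi_{n-1}$, computed from the Koszul resolution, and hence equals $\dim L_n^\chi$ by Molev's theorem. Lemma \ref{detection} then guarantees that these particular components detect every irreducible, so $K_n^\chi=L_n^\chi$ for all $\lambda$ gives $K_n=L_n$.

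Your suggested induction on $n$ with base cases $n=4,5$ does not by itself replace either ingredient.
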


The vanishing condition can be replaced by two linear differential equations.

\begin{theorem}
\label{pde-intro}
Let $P\in S_{n-1}$, $n\ge2$.
Then $P$ is the symbol of a multilinear Lie element of $\Nov\langle X\rangle$ if and only if
\begin{align}
\sum_{i=1}^{n}
\Bigl(
t_i\frac{\partial^2P}{\partial t_i^2}
-2\frac{\partial P}{\partial t_i}
\Bigr)
&=0,
\label{PDE1-intro}\\
\sum_{i=1}^{n}
\Bigl(
t_i\frac{\partial^3P}{\partial t_i^3}
-3\frac{\partial^2P}{\partial t_i^2}
\Bigr)
&=0.
\label{PDE2-intro}
\end{align}
\end{theorem}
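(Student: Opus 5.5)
The plan is to deduce Theorem~\ref{pde-intro} from Theorem~\ref{main-intro}. Write
\[
\cD_k=\sum_{i=1}^n\Bigl(t_i\frac{\partial^{k+1}}{\partial t_i^{k+1}}-(k+1)\frac{\partial^{k}}{\partial t_i^{k}}\Bigr),\qquad k\ge1,
\]
so that \eqref{PDE1-intro} and \eqref{PDE2-intro} say $\cD_1P=\cD_2P=0$. I will prove two inclusions: first $L_n\subseteq\ker\cD_1\cap\ker\cD_2$ directly, and then $\ker\cD_1\cap\ker\cD_2\subseteq\{P\in S_{n-1}: P|_{V_n}=0\}$, which equals $L_n$ by Theorem~\ref{main-intro}.

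\textbf{First inclusion.} Since $L_n$ is spanned by symbols of Lie monomials, I argue by induction on the bracket structure. The base case is a single generator, with symbol $1$. For the inductive step, let $f,g$ be multilinear Lie elements in disjoint variable sets $I,J$, with $\deg P_f=|I|-1$ and $\deg P_g=|J|-1$. The symbol of the commutator is $P_{[f,g]}=P_fP_g\,\ell$, where $\ell=T_J-T_I$ and $T_I=\sum_{i\in I}t_i$ (this follows from $x\circ y=xD(y)$). I then expand $\cD_k(P_fP_g\ell)$ by the Leibniz rule:
\begin{itemize}
\item cross terms between $P_f$ and $P_g$ vanish, because the two factors involve disjoint variables;
\item $\ell$ is linear, so only its first derivatives appear, and these are $\mp1$ on $I$ and $J$;
\item the terms $\cD_k(P_f)$ and $\cD_k(P_g)$ vanish by induction;
\item the remaining terms are Euler-type sums $\sum_{i\in I}t_i\partial_i^{m}P_f$ and $\sum_{i\in I}\partial_i^{m}P_f$.
\end{itemize}
To show these remaining terms cancel, I need them to combine into $\cD_{k-1}$-type expressions, which vanish by induction, plus multiples of $\deg P_f-(|I|-1)$, which are zero. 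To make the induction close, I will prove the stronger statement that $\cD_kP=0$ for all $k\ge1$ simultaneously. This is a bookkeeping computation and I expect no surprises.

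\textbf{Second inclusion.} This is the main obstacle. Let $P\in\ker\cD_1\cap\ker\cD_2$.
\begin{enumerate}
\item The operators $\cD_k$ satisfy Witt-type commutation relations $[\cD_j,\cD_k]=c_{jk}\,\cD_{j+k}$ with $c_{jk}\ne0$ (this mirrors $[L_j,L_k]=(j-k)L_{j+k}$). Hence $\cD_1$ and $\cD_2$ generate every $\cD_k$ with $k\ge1$, and $P$ is killed by all of them.
\item For each point $a\in V_n$, I want to write the value $P(a)$ as a linear functional built from the $\cD_kP$. The natural tool is the substitution $b_i=1-a_i\ge0$. Under it, $V_n$ becomes the set of $b\in\bZ_{\ge0}^n$ with $\sum b_i\le n-2$, a region in which homogeneous interpolation in degree $n-1$ is governed by finitely many point values.
\end{enumerate}
On a single variable, $\cD_k$ acts on $t^m$ as a polynomial in $m$ times $t^{m-k}$; for example $t\partial^2-2\partial$ sends $t^m$ to $m(m-3)t^{m-1}$. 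These are exactly the weights of the Witt action on densities, and the points with $a_i\le1$ correspond to the non-vanishing range.

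\textbf{Fallback.} If the direct pairing argument becomes unwieldy, I will instead compare dimensions. The first inclusion gives $L_n\subseteq\ker\cD_1\cap\ker\cD_2$. It then suffices to bound $\dim(\ker\cD_1\cap\ker\cD_2)$ above by $\dim L_n$, which the paper recovers in Corollary~\ref{dimension} via Molev's description. The upper bound would come from exhibiting $\binom{2n-2}{n}=|V_n|$ independent linear functionals on $S_{n-1}$ that factor through $(\cD_1P,\cD_2P)$. I would construct these functionals by a triangularity argument with respect to a monomial order on the points of $V_n$.
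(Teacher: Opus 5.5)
Your first inclusion is correct, and it is a more elementary route to necessity than the paper's. The paper obtains $L_n\subseteq K_n$ from the Witt algebra and then invokes (i)$\Rightarrow$(iii) of Lemma~\ref{differential-criterion}. In your computation, with $\ell=\sigma_J-\sigma_I$ and $k\ge2$, the $I$-part of $\cD_k(P_fP_g\ell)$ is $P_g\bigl(\ell\,\cD_kP_f-(k+1)\cD_{k-1}P_f\bigr)$ and the $J$-part is $P_f\bigl(\ell\,\cD_kP_g+(k+1)\cD_{k-1}P_g\bigr)$. For $k=1$ the leftover terms are $2P_fP_g$ and $-2P_fP_g$ by homogeneity, and they cancel.

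The genuine gap is the other direction, $\ker\cD_1\cap\ker\cD_2\subseteq K_n$, which is the whole content of the statement beyond Theorem~\ref{main-intro}. Your primary plan only announces that $P(a)$ should be a linear functional of the $\cD_kP$; it gives no mechanism. Your fallback is not merely incomplete but impossible as stated, because the evaluations at the points of $V_n$ are not linearly independent on $S_{n-1}$. Suppose you had $|V_n|=\binom{2n-2}{n}$ independent functionals vanishing on $\ker\cD_1\cap\ker\cD_2$. Then $\dim(\ker\cD_1\cap\ker\cD_2)\le\binom{2n-2}{n-1}-\binom{2n-2}{n}$. Already for $n=4$ this bound is $20-15=5$, which contradicts your own first inclusion since $\dim L_4=6$. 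Moreover, a dimension bound in the style of Lemma~\ref{mixed-bound} needs vanishing on the sets $V_q$, which is exactly the inclusion you are trying to prove.

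The missing idea is the operator $T$ of \eqref{T-def}: $T(t^\alpha)=\prod_ih_{\alpha_i}(t_i)$ with $h_r(t)=(t-1)t\cdots(t+r-2)$. Three facts make it work:
\begin{enumerate}
\item Every $TP$ with $P\in S_{n-1}$ vanishes on $V_n$: if $|b|\le n-2<|\alpha|$, then $b_i<\alpha_i$ for some $i$, and $h_{\alpha_i}(1-b_i)=0$.
\item $T$ is unipotent with respect to the degree filtration.
\item The generating function $\sum_rh_r(t)z^r/r!=(1-z)^{1-t}$ yields the conjugation identity $T^{-1}ET=E-\sum_{k\ge1}\cD_k/(k(k+1))$, where $E$ is the Euler operator.
\end{enumerate}
Consequently
\[
TP-P=-\bigl(E-(n-1)\bigr)^{-1}\,T\sum_{k\ge1}\frac{\cD_kP}{k(k+1)},
\]
with $E-(n-1)$ inverted on polynomials of degree $<n-1$. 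Since $P(a)=(P-TP)(a)$ for $a\in V_n$, this is precisely the explicit pairing you were looking for. You correctly get all $\cD_kP=0$ from $\cD_1P=\cD_2P=0$ via $[\cD_1,\cD_k]=(1-k)\cD_{k+1}$; your claim $c_{jk}\ne0$ fails for $j=k$, but it is not needed. Once all $\cD_kP$ vanish, $TP=P$, so $P$ vanishes on $V_n$.
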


Neither of the equations \eqref{PDE1-intro}, \eqref{PDE2-intro} alone is sufficient; see Examples \ref{ex4} and \ref{ex6}.

The necessity of the vanishing condition comes from the Witt algebra $W_1=\Span\{e_r\mid r\in\bZ\}$, $[e_r,e_s]=(s-r)e_{r+s}$, which is the commutator algebra of the Laurent polynomial ring with the product \eqref{GD-product}, $D=z\,d/dz$.
The elements $e_r$ with $r\le1$ span a Lie subalgebra, and the evaluation of a multilinear Lie element at $e_{a_1},\ldots,e_{a_n}$ equals $P_f(a)e_{a_1+\cdots+a_n}$.
The sufficiency is the main part of the paper.
We first show that the vanishing condition is equivalent to the system of differential equations $\cD_kP=0$, $k\ge1$, where
\[
\cD_k=\sum_{i=1}^{n}\bigl(t_i\partial_i^{k+1}-(k+1)\partial_i^k\bigr),
\]
and that the operators $\cD_k$ satisfy the Witt relations $[\cD_k,\cD_l]=(k-l)\cD_{k+l}$; this explains why two equations suffice.
The differential form of the condition yields an injectivity property of the specialization $t_1=\cdots=t_a=0$ on polynomials symmetric in $t_1,\ldots,t_a$ (Lemma \ref{specialization}).
Combined with homogeneous interpolation, this gives an upper bound for the dimension of every mixed symmetry component of the space of polynomials satisfying the vanishing condition (Lemma \ref{mixed-bound}).
The bound coincides with the dimension of the corresponding component of the degree $n-1$ part of the coinvariant algebra of $S_n$, which is isomorphic to $L_n$ by a theorem of Molev \cite{Mo}.
Since every irreducible $S_n$-module is detected by some mixed symmetry component (Lemma \ref{detection}), the reverse inclusion follows.

As applications of the criterion we prove that a nonzero multilinear Lie element is never a total derivative (Proposition \ref{no-total-derivative}), that the Novikov product $f\circ g$ of two nonzero Lie elements in disjoint variables is never a Lie element (Remark \ref{product-remark}), and we recover the formula
\[
\dim L_n
=
[z^{n-1}]\prod_{j=1}^{n}(1+z+\cdots+z^{j-1}),
\]
that is, $\dim L_n$ is the number of permutations of $n$ letters with exactly $n-1$ inversions.
In the last section the criterion is illustrated by examples in degrees $4$, $5$ and $6$.

Throughout the paper $\K$ is a field of characteristic $0$, $[n]=\{1,\ldots,n\}$, $S=\K[t_1,\ldots,t_n]$ and $S_d$ is the space of homogeneous polynomials of degree $d$ in $S$.
For a finite set $A\subseteq[n]$ we write $|A|$ for its cardinality, and for $b\in\bZ_{\ge0}^n$ we write $|b|=b_1+\cdots+b_n$.

\section{Free Novikov algebras and symbols}

Let $(N,\circ)$ be a Novikov algebra, i.e.\ an algebra satisfying \eqref{left-sym} and \eqref{right-comm}.
It is well known that $N$ is Lie-admissible: the commutator \eqref{commutator} satisfies the Jacobi identity.
Indeed, \eqref{left-sym} means that the associator $(a,b,c)=(a\circ b)\circ c-a\circ(b\circ c)$ is symmetric in $a,b$, and for any algebra with this property the Jacobi identity for the commutator follows from the standard expansion of $[[a,b],c]+[[b,c],a]+[[c,a],b]$ as a sum of associators.
The Lie algebra $(N,[\cdot,\cdot])$ is denoted by $N^{(-)}$.

If $A$ is a commutative associative algebra with a derivation $D$, then $(A,\circ)$ with $a\circ b=aD(b)$ satisfies \eqref{left-sym} and \eqref{right-comm}:
\[
(a\circ b)\circ c-a\circ(b\circ c)=-\,abD^2(c),
\qquad
(a\circ b)\circ c=aD(b)D(c).
\]

Let $X=\{x_1,\ldots,x_n\}$ and let
\begin{equation}
\K\{X\}
=
\K[x_i^{(r)}\mid 1\le i\le n,\ r\ge0]
\label{diff-alg}
\end{equation}
be the free commutative differential algebra generated by $X$, where $x_i=x_i^{(0)}$ and the derivation $D$ is defined by $D(x_i^{(r)})=x_i^{(r+1)}$.
With the product $f\circ g=fD(g)$ the algebra $\K\{X\}$ is a Novikov algebra.
By \cite{DL}, the subalgebra of $(\K\{X\},\circ)$ generated by $X$ is the free Novikov algebra $\Nov\langle X\rangle$ (see also \cite{DI14, BCZ}).

Let $\Nov(n)$ be the multilinear component of $\Nov\langle X\rangle$ in $x_1,\ldots,x_n$.
By \cite{DL}, it has the basis
\begin{equation}
x^{(\alpha)}
=
x_1^{(\alpha_1)}\cdots x_n^{(\alpha_n)},
\qquad
\alpha=(\alpha_1,\ldots,\alpha_n)\in\bZ_{\ge0}^n,\quad
|\alpha|=n-1.
\label{Nov-base}
\end{equation}
Hence the linear map
\begin{equation}
x^{(\alpha)}\longmapsto t^{\alpha}=t_1^{\alpha_1}\cdots t_n^{\alpha_n}
\label{symbol-map}
\end{equation}
identifies $\Nov(n)$ with $S_{n-1}$.
For $f\in\Nov(n)$ we denote its image under \eqref{symbol-map} by $P_f$ and call it the \emph{symbol} of $f$.

More generally, for $A\subseteq[n]$ let $\K\{X\}_A$ be the span of the differential monomials $\prod_{i\in A}x_i^{(\alpha_i)}$, $\alpha_i\ge0$, which are multilinear in the variables indexed by $A$.
The symbol map is extended to $\K\{X\}_A$ by
\begin{equation}
P_{\prod_{i\in A}x_i^{(\alpha_i)}}
=
\prod_{i\in A}t_i^{\alpha_i}
\label{symbol-ext}
\end{equation}
and linearity.
For $A\subseteq[n]$ put
\begin{equation}
\sigma_A=\sum_{i\in A}t_i,
\qquad
\sigma=\sigma_{[n]}=t_1+\cdots+t_n.
\label{sigma}
\end{equation}

\begin{lemma}
\label{symbol-rules}
Let $A,B\subseteq[n]$ be disjoint, $f\in\K\{X\}_A$ and $g\in\K\{X\}_B$.
Then
\begin{align}
P_{f\circ g}&=P_fP_g\,\sigma_B,
\label{symbol-circ}\\
P_{[f,g]}&=P_fP_g\,(\sigma_B-\sigma_A),
\label{symbol-bracket}\\
P_{D(f)}&=P_f\,\sigma_A.
\label{symbol-D}
\end{align}
\end{lemma}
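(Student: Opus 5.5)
By linearity in $f$ and $g$, and since the right-hand sides of \eqref{symbol-circ}--\eqref{symbol-D} are bilinear (resp.\ linear) in $P_f,P_g$, it suffices to check the three identities on differential monomials. So take $f=\prod_{i\in A}x_i^{(\alpha_i)}$ and $g=\prod_{j\in B}x_j^{(\beta_j)}$. The identity \eqref{symbol-D} will be proved first, because \eqref{symbol-circ} and \eqref{symbol-bracket} follow from it.

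For \eqref{symbol-D}, apply the Leibniz rule in the commutative differential algebra $\K\{X\}$:
\[
D(f)=\sum_{k\in A}x_k^{(\alpha_k+1)}\prod_{i\in A\setminus\{k\}}x_i^{(\alpha_i)} .
\]
Each summand is again a differential monomial that is multilinear in the variables indexed by $A$. Its symbol is $t_k\prod_{i\in A}t_i^{\alpha_i}$. Summing over $k\in A$ gives $P_f\sigma_A$.

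For \eqref{symbol-circ}, note that $f\circ g=fD(g)$. Because $A\cap B=\emptyset$, the product of a monomial in $\K\{X\}_A$ with a monomial in $\K\{X\}_B$ is a monomial in $\K\{X\}_{A\cup B}$. Its symbol is the product of the two symbols, since the exponent vectors are supported on disjoint sets of indices. Extending bilinearly, $P_{uv}=P_uP_v$ for all $u\in\K\{X\}_A$ and $v\in\K\{X\}_B$. Hence, using \eqref{symbol-D} for $g$,
\[
P_{f\circ g}=P_fP_{D(g)}=P_fP_g\sigma_B .
\]

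For \eqref{symbol-bracket}, we have $[f,g]=f\circ g-g\circ f$. Applying \eqref{symbol-circ} to both terms, with the roles of $A$ and $B$ exchanged in the second, gives
\[
P_{[f,g]}=P_fP_g\sigma_B-P_gP_f\sigma_A=P_fP_g(\sigma_B-\sigma_A).
\]

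The only point that needs care is the multiplicativity of the symbol on disjoint supports. This uses the fact that, for $A\cap B=\emptyset$, the set $\K\{X\}_{A\cup B}$ consists exactly of the products of elements of $\K\{X\}_A$ and $\K\{X\}_B$, so the extended symbol map \eqref{symbol-ext} is well defined and compatible with these products. Everything else is a direct computation.
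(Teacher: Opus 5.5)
Your proof is correct and matches the paper's argument: Leibniz rule on a monomial gives $P_{D(f)}=P_f\sigma_A$, multiplicativity of the symbol on disjoint supports then gives the formula for $f\circ g=fD(g)$, and subtraction gives the bracket. One small wording point: $\K\{X\}_{A\cup B}$ is \emph{spanned by}, not equal to the set of, products $uv$ with $u\in\K\{X\}_A$, $v\in\K\{X\}_B$; your bilinear-extension argument only needs the spanning statement, so nothing breaks.
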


\begin{proof}
By linearity we may assume that $g=\prod_{i\in B}x_i^{(\beta_i)}$.
By the Leibniz rule,
\[
D(g)
=
\sum_{j\in B}x_j^{(\beta_j+1)}\prod_{i\in B,\ i\ne j}x_i^{(\beta_i)},
\]
and applying \eqref{symbol-ext} we get $P_{D(g)}=P_g\sum_{j\in B}t_j=P_g\sigma_B$.
This proves \eqref{symbol-D}, and \eqref{symbol-circ} follows since $f\circ g=fD(g)$ and the symbol of a product of monomials in disjoint variables is the product of symbols.
Finally, \eqref{symbol-bracket} is obtained by subtracting $P_{g\circ f}=P_fP_g\sigma_A$ from \eqref{symbol-circ}.
\end{proof}

An element $f\in\Nov\langle X\rangle$ is called a \emph{Lie element} if it belongs to the Lie subalgebra $\Lie\langle X\rangle$ of $\Nov\langle X\rangle^{(-)}$ generated by $X$.
Let
\begin{equation}
L_n=\{P_f\mid f\in\Nov(n)\text{ is a Lie element}\}\subseteq S_{n-1}
\label{Ln}
\end{equation}
be the space of symbols of multilinear Lie elements of degree $n$.
The symmetric group $S_n$ acts on $\Nov(n)$ by permutations of the variables and on $S_{n-1}$ by permutations of $t_1,\ldots,t_n$; the symbol map is $S_n$-equivariant and $L_n$ is an $S_n$-submodule of $S_{n-1}$.

Let $\tau$ be a binary tree with leaves labelled by $1,\ldots,n$, and let $L_v,R_v$ be the sets of labels of the left and right subtrees at an internal vertex $v$ of $\tau$.
Then $\tau$ defines a Lie monomial (a nested commutator) in $x_1,\ldots,x_n$, and by \eqref{symbol-bracket} its symbol is
\begin{equation}
P_\tau=\prod_{v}(\sigma_{R_v}-\sigma_{L_v}),
\label{tree-symbol}
\end{equation}
the product being taken over the internal vertices of $\tau$.
Consequently $L_n=\Span\{P_\tau\}$.

For $\pi\in S_n$, put
\[
[x_{\pi(1)},\ldots,x_{\pi(n)}]
=
[\cdots[[x_{\pi(1)},x_{\pi(2)}],x_{\pi(3)}],\ldots,x_{\pi(n)}]
\]
(left-normed commutator). By \eqref{tree-symbol}, its symbol is
\begin{equation}\label{comb}
r_\pi
=
\prod_{j=2}^{n}
\Bigl(t_{\pi(j)}-\sum_{i<j}t_{\pi(i)}\Bigr).
\end{equation}
Since the multilinear component of a free Lie algebra is spanned by left-normed commutators \cite{Re}, we obtain
\begin{equation}
L_n=\Span_{\K}\{r_\pi\mid\pi\in S_n\}.
\label{Ln-span}
\end{equation}

For example, $r_{\mathrm{id}}=(t_2-t_1)(t_3-t_1-t_2)=t_1^2-t_2^2-t_1t_3+t_2t_3$ for $n=3$, i.e.
\[
[[x_1,x_2],x_3]
=
x_1^{(2)}x_2x_3-x_1x_2^{(2)}x_3-x_1^{(1)}x_2x_3^{(1)}+x_1x_2^{(1)}x_3^{(1)}.
\]

\section{The vanishing set and the main theorem}

For $n\ge2$ define
\begin{equation}
V_n
=
\bigl\{
(1-b_1,\ldots,1-b_n)
\mid
b\in\bZ_{\ge0}^n,\ |b|\le n-2
\bigr\}.
\label{Vn}
\end{equation}
Equivalently, $V_n$ is the set \eqref{Vn-intro} of integer vectors $a$ with $a_i\le1$ and $a_1+\cdots+a_n\ge2$.
Put
\begin{equation}
K_n
=
\{P\in S_{n-1}\mid P(a)=0\text{ for every }a\in V_n\}.
\label{Kn}
\end{equation}
The set $V_n$ is $S_n$-invariant, hence $K_n$ is an $S_n$-submodule of $S_{n-1}$.

\begin{theorem}
\label{main-thm}
Let $\K$ be a field of characteristic $0$ and $n\ge2$.
Then
\begin{equation}
L_n=K_n.
\label{L=K}
\end{equation}
In other words, $f\in\Nov(n)$ is a Lie element if and only if $P_f(a)=0$ for every $a\in V_n$.
\end{theorem}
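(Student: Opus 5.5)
The proof will establish the two inclusions $L_n\subseteq K_n$ and $K_n\subseteq L_n$ separately, following the strategy announced in the introduction.

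\emph{The inclusion $L_n\subseteq K_n$.} I will use the Witt algebra. Take the Laurent polynomial ring $\K[z,z^{-1}]$ with $D=z\,d/dz$ and put $e_r=z^r$. The Novikov product is $e_r\circ e_s=s\,e_{r+s}$, so the commutator gives $[e_r,e_s]=(s-r)e_{r+s}$. The elements $e_r$ with $r\le1$ span a Lie subalgebra, since $r,s\le1$ does not force $r+s\le 1$, but whenever $r+s\ge2$ with $r,s\le 1$ we get $r=s=1$ and the coefficient vanishes. By freeness of $\Nov\langle X\rangle$, there is a homomorphism sending $x_i\mapsto e_{a_i}$, and it maps $x_i^{(\alpha_i)}\mapsto a_i^{\alpha_i}e_{a_i}$. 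Hence $f\mapsto P_f(a)\,e_{|a|}$. If $f$ is a Lie element, its image lies in $\Span\{e_r\mid r\le1\}$. So for $|a|\ge2$ and all $a_i\le1$ we get $P_f(a)=0$, i.e.\ $P_f\in K_n$. Alternatively, one can check directly by induction on trees that each $P_\tau$ from \eqref{tree-symbol} vanishes on $V_n$.

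\emph{The inclusion $K_n\subseteq L_n$.} This is the main part, and I will argue by comparing $S_n$-characters. By Molev's theorem, $L_n$ is isomorphic as an $S_n$-module to the degree $n-1$ component of the coinvariant algebra of $S_n$. Since $L_n\subseteq K_n$, it suffices to show that for every irreducible $S_n$-module $M$ the multiplicity of $M$ in $K_n$ is at most its multiplicity in $L_n$. I will proceed in four steps.
\begin{enumerate}
\item Translate the vanishing on $V_n$ into the differential system $\cD_kP=0$. Points of $V_n$ are $1-b$ with $|b|\le n-2$. Evaluating at these points amounts to pairing $P$ against a finite family of functionals, and via a generating-function or Fourier-type identity these should correspond to the operators $\cD_k$. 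The Witt relations among the $\cD_k$ will be checked by direct commutator computation.
\item Deduce the specialization lemma. If $P\in K_n$ is symmetric in $t_1,\ldots,t_a$ and $P|_{t_1=\cdots=t_a=0}=0$, then $P=0$. I will try induction on the $t_1,\ldots,t_a$-degree using the operator $\cD_1$ restricted to the symmetric block.
\item Bound each mixed-symmetry component. For a composition $\mu$, consider the polynomials in $K_n$ that are symmetric in each block of $\mu$. Repeated specialization injects this space into a space of homogeneous polynomials in fewer variables that still satisfy interpolation constraints. Homogeneous interpolation on the remaining lattice points then gives a dimension bound.
\item Identify the bound with the multiplicity of the permutation module $\Ind_{S_\mu}^{S_n}\bfe$ in the degree $n-1$ coinvariants, i.e.\ with $\dim$ of the $S_\mu$-invariants there, which is a $q$-multinomial coefficient. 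Since the invariant dimensions for all $\mu$ determine a character (Kostka triangularity), equality of all these numbers together with $L_n\subseteq K_n$ forces $K_n=L_n$.
\end{enumerate}

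\emph{Expected obstacle.} The hardest part is step 3: obtaining the sharp dimension bound for the mixed-symmetry components. In particular, I need the interpolation count after specialization to exactly match the coefficient of $q^{n-1}$ in $\binom{n}{\mu}_q$. My first attempt will be to set up an induction on the number of blocks, peeling off one block at a time with the specialization lemma.
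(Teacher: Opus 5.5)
The inclusion $L_n\subseteq K_n$ is correct and is the paper's Witt-algebra argument; your alternative induction over trees also works.

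\textbf{The gap in steps 3--4.} The genuine gap is in the reverse inclusion, and it lies in the choice of components.

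Using only Young-subgroup invariants works against you. One has $(S^\lambda)^{S_\mu}\neq0$ only when $\lambda\trianglerighteq\mu$. So to detect irreducibles with many rows, e.g.\ $\lambda=(2,1^{n-2})$ or $(1^n)$, you are forced to take $\mu$ with almost all parts equal to $1$. For $\mu=(1^n)$ the bound you must prove is $\dim K_n\le\dim(\cH_n)_{n-1}=\dim L_n$, which is the theorem itself.

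Nor can you peel off blocks one at a time. Lemma~\ref{specialization} uses $\cD_kP=0$ in \emph{all} $n$ variables, and specialization destroys this. From $\rho_a(\cD_kP)=0$ one gets that the operator $\cD_k$ in $t_{a+1},\ldots,t_n$ sends $\rho_a(P)$ to $(k+1)\sum_{i\le a}\rho_a(\partial_i^kP)$, which is generally nonzero. For instance, with $n=3$, $a=1$, $P=r_{\mathrm{id}}$, the operator $\cD_1$ in $t_2,t_3$ sends $\rho_1(P)=t_2t_3-t_2^2$ to $-2t_3$. Only vanishing on $V_q$ survives, so the lemma can be applied exactly once.

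The missing idea is the paper's choice of components:
\begin{itemize}
\item Take $\chi_\lambda$ \emph{alternating} on the blocks of the columns of $\lambda$ of length $\ge3$, and trivial only on the two remaining rows $a\ge b$.
\item By the Pieri rule every $S^\lambda$ is still detected (Lemma~\ref{detection}).
\item Specialize only the longest row, once.
\item Vandermonde divisibility forces the degree $s$ in the alternating variables to satisfy $s\ge\delta\ge C$, hence $m=b+C-s\le b$.
\item The coefficients $B_j$ on the $b$-block then vanish on $E_{b,m}$ by Lemma~\ref{simplex}.
\item Molev's homogeneous interpolation (Theorem~\ref{Molev-interpolation}) applies because $r=a+m-1\ge2m-4$.
\end{itemize}
This yields exactly $[z^{n-1}]\Hilb(\cH_n^\chi,z)$.

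\textbf{Steps 1 and 2.} These are not yet arguments either.

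Step 1 needs a concrete identity. The paper uses the operator $T\colon t^\alpha\mapsto\prod_ih_{\alpha_i}(t_i)$ with $h_r(t)=(t-1)t\cdots(t+r-2)$. Then $TP$ vanishes on $V_n$ for every $P\in S_{n-1}$, and hence $K_n=\{P\mid TP=P\}$ by Lemma~\ref{simplex}. This is combined with the conjugation formula $T^{-1}ET=E-\sum_{k\ge1}\cD_k/(k(k+1))$, derived from $\sum_rh_r(t)z^r/r!=(1-z)^{1-t}$.

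In step 2, $\cD_1$ alone cannot work. The polynomial $P=2(t_1^2+t_2^2+t_3^2)-(t_1t_2+t_1t_3+t_2t_3)$ is symmetric, satisfies $\cD_1P=0$ and $P(0,0,0)=0$, yet $P\neq0$ (indeed $\cD_2P=-36$). You need all $\cD_k$, or equivalently $\cD_1,\cD_2$ together with the Witt relations. Even then you still need an argument that a block-symmetric form killed by all of them is determined by its specialization. The paper obtains this from the flows $\exp(z\delta_k)$ on jet variables. These send $u^{(k)}\mapsto u^{(k)}-zu^{(0)}$, and so move any point with $u^{(0)}\neq0$ to one with $u^{(1)}=\cdots=u^{(d)}=0$.
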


Theorem \ref{main-thm} is Theorem \ref{main-intro} of the introduction.

Both spaces in \eqref{L=K} are defined over $\bZ$: $L_n$ is spanned by the polynomials \eqref{comb}, which have integer coefficients, and $K_n$ is the kernel of the evaluation map $S_{n-1}\to\K^{V_n}$ at the integer points of $V_n$, whose matrix in the monomial basis has integer entries.
Therefore $\dim_{\K}L_n$ and $\dim_{\K}K_n$ do not depend on the field $\K$ of characteristic $0$, and the inclusion $L_n\subseteq K_n$ (once proved over $\bQ$) is preserved by extension of scalars.
Consequently, it suffices to prove Theorem \ref{main-thm} for $\K=\bC$.
From now on until the end of Section \ref{sec-proof} we assume $\K=\bC$.

Let $e_1,\ldots,e_n$ be the elementary symmetric polynomials in $t_1,\ldots,t_n$. The coinvariant algebra
\begin{equation}
\cH_n=\bC[t_1,\ldots,t_n]/(e_1,\ldots,e_n)
\label{coinv}
\end{equation}
is a graded $S_n$-module; it is isomorphic to the graded $S_n$-module of harmonic polynomials and, as an ungraded module, to the regular representation \cite{Ch, St}.
We denote by $(\cH_n)_d$ its homogeneous component of degree $d$.

The following two results are due to Molev \cite{Mo}; we state them in the form used below.

\begin{theorem}[Molev]
\label{Molev-module}
As $S_n$-modules, $L_n\simeq(\cH_n)_{n-1}$.
\end{theorem}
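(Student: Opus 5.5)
The plan is to realize the isomorphism concretely. Let $I=(e_1,\ldots,e_n)$ and let $I_{n-1}=I\cap S_{n-1}$. I will show that $L_n$ is an $S_n$-stable complement of $I_{n-1}$ in $S_{n-1}$, that is,
\[
L_n\cap I_{n-1}=0
\qquad\text{and}\qquad
L_n+I_{n-1}=S_{n-1}.
\]
Since $L_n$ and $I_{n-1}$ are both $S_n$-submodules, the restriction to $L_n$ of the projection $S_{n-1}\to(\cH_n)_{n-1}$ is then an $S_n$-equivariant bijection. This gives $L_n\simeq(\cH_n)_{n-1}$. By Maschke's theorem it is enough to prove one of the two conditions together with the dimension equality
\[
\dim L_n=\dim(\cH_n)_{n-1}.
\]
The right-hand side is the number of permutations of $[n]$ with exactly $n-1$ inversions. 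This is the coefficient of $z^{n-1}$ in $\prod_{j=1}^{n}(1+z+\cdots+z^{j-1})$, computed from the Artin basis $\{t^b\mid 0\le b_i\le i-1\}$ of $\cH_n$.

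\emph{Surjectivity $L_n+I_{n-1}=S_{n-1}$.} I work modulo $I$ using the Artin basis. Write $r_\pi$ from \eqref{comb} as a product of linear factors $t_{\pi(j)}-\sigma_{\{\pi(1),\ldots,\pi(j-1)\}}$. Modulo $e_1$ one has $\sigma\equiv0$, so the $j$-th factor can be rewritten as $2t_{\pi(j)}+\sigma_{\{\pi(j+1),\ldots,\pi(n)\}}$. The idea is to choose a family of trees $\tau$ (more flexible than combs, via \eqref{tree-symbol}) whose symbols have, in a suitable monomial order compatible with the Artin basis, distinct leading Artin monomials $t^b$ with $|b|=n-1$. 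I would attempt this by induction on $n$. Adjoining a new variable $x_n$ to a Lie element in $x_1,\ldots,x_{n-1}$, for instance by bracketing into a subtree, multiplies the symbol by a linear form involving $t_n$. This should raise the exponent $b_n$ by any amount from $0$ to $n-1$ while preserving the leading terms in the other variables.

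\emph{Dimension bound $\dim L_n\le\dim(\cH_n)_{n-1}$.} This is the step I expect to be the main obstacle. The free Lie component has dimension $(n-1)!$, which is too large, so the identities satisfied by $\Lie\langle X\rangle$ must be exploited. My first attempt would avoid exhibiting a basis and instead produce enough linear functionals vanishing on $L_n$. By the Witt algebra evaluation described in the introduction, every $P\in L_n$ vanishes at each point of $V_n$. It would then suffice to show that the evaluation map $S_{n-1}\to\bC^{V_n}$ has rank at least $\dim S_{n-1}-\dim(\cH_n)_{n-1}=\dim I_{n-1}$. I would try to prove this rank bound by a degeneration argument, replacing $V_n$ by the shifted set $\{1-b\mid |b|\le n-2\}$ and comparing with the Hilbert function of the ideal of those points.

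If this rank computation proves intractable, the fallback is character-theoretic. One would show that the characters of $L_n$ and of $(\cH_n)_{n-1}$ agree by computing traces of elements of $S_n$ on the spanning set $\{r_\pi\}$. On the $\cH_n$ side the graded character is given by the fake-degree formula of Lusztig and Stanley; on the $L_n$ side one would use the description of the multilinear Lie component as the induced module $\Ind_{C_n}^{S_n}\omega$ modulo the additional Novikov identities. In either route, the surjectivity step above, combined with an equality of dimensions, closes the argument.
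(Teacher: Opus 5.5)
The paper does not prove this statement. It quotes it from Molev and uses it as the lower-bound input in Section~\ref{sec-proof}. Your proposal therefore has to stand on its own, and as written it is a plan rather than a proof: neither $L_n+I_{n-1}=S_{n-1}$ nor $\dim L_n\le\dim(\cH_n)_{n-1}$ is established.

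\emph{Surjectivity.} This step asserts more than the theorem, namely that this particular projection is onto, and the mechanism you describe does not work.
\begin{itemize}
\item By \eqref{tree-symbol}, bracketing $x_n$ into a proper subtree does not multiply the symbol by a linear form: every ancestor factor $\sigma_{R_v}-\sigma_{L_v}$ acquires a term $\pm t_n$. Only $[\,\cdot\,,x_n]$ multiplies by a single linear form, and that raises the $t_n$-degree by exactly one.
\item More fundamentally, a symbol from $L_{n-1}$ has degree $n-2$ in $t_1,\ldots,t_{n-1}$. So ``preserving the leading terms in the other variables'' forces $b_n=1$.
\item The Artin monomials with $b_n=k$ are counted by $\dim(\cH_{n-1})_{n-1-k}$. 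The identity $\dim(\cH_n)_{n-1}=\sum_{k=0}^{n-1}\dim(\cH_{n-1})_{n-1-k}$ therefore involves graded pieces of $\cH_{n-1}$ unrelated to $L_{n-1}\simeq(\cH_{n-1})_{n-2}$. An induction fed only by Lie elements in $x_1,\ldots,x_{n-1}$ cannot produce them.
\item You would also need to control normal forms modulo a Gr\"obner basis of $I$, not just leading terms of the $r_\pi$. The sketch does not address this.
\end{itemize}

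\emph{Upper bound.} The rank statement you want is equivalent to $\dim K_n\le\dim(\cH_n)_{n-1}$. The paper obtains this only componentwise, through the jet-flow Lemma~\ref{specialization} and Molev's interpolation lemma, and you give no argument for it.
\begin{itemize}
\item The ``shifted set'' $\{1-b\mid |b|\le n-2\}$ is $V_n$ itself by \eqref{Vn}, so no degeneration is actually described.
\item The required rank is not a general-position count. $V_n$ fails to impose independent conditions on $S_{n-1}$ once $n\ge4$: for $n=4$ the rank needed is $\dim I_3=14<15=|V_4|$.
\item The character-theoretic fallback is circular. The extra Novikov identities that cut $\Ind_{C_n}^{S_n}\omega$ down to $L_n$ are exactly what is unknown. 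Traces cannot be read off the spanning set $\{r_\pi\}$ without its linear relations.
\end{itemize}

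The upper half is in fact available in this paper independently of the statement. The Witt inclusion $L_n\subseteq K_n$, Lemma~\ref{mixed-bound} and \eqref{Hchi-Hilbert} give $\dim L_n^\chi\le\dim(\cH_n)_{n-1}^\chi$ for every $\lambda$. So an $S_n$-equivariant surjection $L_n\to(\cH_n)_{n-1}$ would have a kernel whose $\chi$-components all vanish, and that kernel is zero by Lemma~\ref{detection}. The missing ingredient is therefore a lower bound for $L_n$. That is precisely what the paper imports from Molev, and your surjectivity step only gestures at it.
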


For integers $q\ge m\ge2$, put
\begin{equation}
E_{q,m}
=
\bigl\{
(1-u_1,\ldots,1-u_q)
\mid
u\in\bZ_{\ge0}^q,\ |u|\le m-2
\bigr\},
\label{Eqm}
\end{equation}
and $E_{q,m}=\varnothing$ for $m\le1$.
Note that $E_{n,n}=V_n$.

\begin{theorem}[Molev's homogeneous interpolation lemma]
\label{Molev-interpolation}
Let $q\ge m\ge2$ and $r\ge2m-4$.
Then every function $E_{q,m}\to\bC$ is the restriction of a homogeneous polynomial of degree $r$ in $q$ variables.
\end{theorem}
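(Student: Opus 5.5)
The idea is to reduce to ordinary (non-homogeneous) interpolation on a simplex of lattice points, and then to homogenize. The degree budget $r\ge 2m-4=(m-2)+(m-2)$ suggests that one factor of degree $m-2$ is spent on interpolation and a second of degree $m-2$ on separating the ``levels'' of $E_{q,m}$.

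\emph{Step 1 (affine interpolation).} In the coordinates $u_i=1-t_i$ the set $E_{q,m}$ is the simplex $\Delta=\{u\in\bZ_{\ge0}^q\mid |u|\le m-2\}$. For $v\in\Delta$ the Lagrange polynomial
\[
\ell_v(u)=\prod_{i=1}^q\prod_{k=0}^{v_i-1}(u_i-k)
\]
has degree $|v|$. It vanishes at every $u\in\Delta$ with $u_i<v_i$ for some $i$, and it is nonzero at $u=v$. Ordering $\Delta$ by $|v|$ and using triangularity, every function on $\Delta$ is the restriction of a polynomial of degree $\le m-2$ in $t$.

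\emph{Step 2 (points lie on distinct lines).} Each $a\in E_{q,m}$ has $\sigma(a)\ge q-(m-2)\ge 2$. Also no two points of $E_{q,m}$ are proportional: some $a_i=1$ (otherwise $\sigma(a)\le 0$), which forces any admissible scalar $\lambda$ with $\lambda a\in E_{q,m}$ to be a positive integer $\le 1$. So there is no obstruction to homogeneous interpolation coming from collinear points.

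\emph{Step 3 (homogenization level by level).} Split $E_{q,m}$ into the levels $E^{(s)}=E_{q,m}\cap\{\sigma=s\}$, for $s=q-m+2,\ldots,q$. On a single level $E^{(s)}$, a polynomial $Q=\sum_k Q_k$ of degree $\le d$ can be replaced by the homogeneous polynomial $\sum_k Q_k\,(\sigma/s)^{d-k}$ of any degree $\ge d$, and this does not change its values on $E^{(s)}$. Hence for each level there is a homogeneous polynomial of degree $r-(m-2)\ge m-2$ interpolating $f$ on that level.

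\emph{Step 4 (induction on $m$).} I would glue the levels by induction on $m$. The case $m=2$ is the single point $(1,\ldots,1)$, which is handled by $\sigma^r/q^r$. For the step, use the inductive hypothesis (legitimate since $r\ge 2(m-1)-4$) to interpolate $f$ on $E_{q,m-1}$ by a homogeneous $G'$ of degree $r$. It then remains to add a homogeneous $R$ of degree $r$ that vanishes on $E_{q,m-1}$ and takes prescribed values on the new layer $|u|=m-2$, where $\sigma=s_0:=q-m+2$. For $R$, the first try is to take, for each $v$ with $|v|=m-2$, the homogenized Lagrange product
\[
\prod_i\prod_{k<v_i}\bigl((1-k)\sigma/s_0-t_i\bigr),
\]
which is correct on the new layer. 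Then multiply it by a homogeneous factor $H$ of degree $m-2$ that is nonzero on the new layer and kills whatever the first factor fails to kill on lower layers. The natural candidates for $H$ are products of forms such as $(s_0 t_j-(1-k)\sigma)$, which detect the ratio $t_j/\sigma$ rather than $t_j$ itself.

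I expect Step 4, and specifically the construction of the degree-$(m-2)$ separating factor, to be the main obstacle. There is no homogeneous linear form that is constant on a level, so the naive factors $(\sigma-s)$ are unavailable. A point of a lower layer with $u_i=k$ is cut out by the ratio $t_i/\sigma=(1-k)/\sigma(a)$, which depends on the level. If the budget of $m-2$ turns out too tight with this choice, I would instead compare dimensions. In that approach I would show that the evaluation map from degree-$r$ forms to $\bC^{E_{q,m}}$ is surjective by exhibiting, for a suitable ordering of $E_{q,m}$ (first by $|u|$, then lexicographically), a triangular family of products of forms $(c\,\sigma-d\,t_i)$ of total degree at most $2m-4$, padded by powers of $\sigma$.
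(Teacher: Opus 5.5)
The paper does not prove this lemma; it is quoted from Molev, so your argument has to stand on its own. At present it does not, because of a genuine gap in Step 4.

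\textbf{What is done and what is missing.} Steps 1--3 are correct but routine. The whole content of the lemma is Step 4: for each $v$ with $|v|=m-2$ you need a form of degree $r$ that vanishes on $E_{q,m-1}$ and at the other top-layer points, but not at $\mathbf 1-v$. You do not construct it.

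\textbf{The proposed factors cannot work.} A form $c\sigma-dt_k$ vanishes at $\mathbf 1-u$ iff $c(q-|u|)=d(1-u_k)$. Take $m=4$ and $v=\epsilon_i+\epsilon_j$.
\begin{itemize}
\item Your homogenized Lagrange factor $(\sigma/(q-2)-t_i)(\sigma/(q-2)-t_j)$ is nonzero at all $q+1$ points $\mathbf 1$, $\mathbf 1-\epsilon_l$ of $E_{q,3}$. So $H$, of degree $2$, must vanish at all of them.
\item Among forms $c\sigma-dt_k$, only multiples of $\sigma-qt_k$ vanish at $\mathbf 1$, and these vanish at no $\mathbf 1-\epsilon_l$.
\item No single nonzero such form vanishes at every $\mathbf 1-\epsilon_l$.
\end{itemize}
Hence no product of two such forms can serve as $H$.

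\textbf{The fallback also fails.} For $m=5$ (any $q\ge5$), each such form with $cd\ne0$ vanishes on $E_{q,5}$ only at points of one level with one fixed value of $u_k$. The form $t_k$ vanishes exactly where $u_k=1$. A case count then shows that a product of such forms vanishing at all but one point of $E_{q,5}$ needs at least seven factors. Triangularity requires isolating the last point of the ordering, so no triangular family of degree $6=2m-4$ of the kind you describe exists, whatever the ordering.

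\textbf{What is needed instead.} Use linear forms of other shapes. Every linear form restricts to an affine function of $u$ whose zero hyperplane passes through $u=\mathbf 1$. You should choose hyperplanes that meet many levels, such as $\{u_b=u_j\}$, which comes from $t_b-t_j$. For $m=4$ and $v=\epsilon_i+\epsilon_j$, the factor $H=(\sigma-(q-1)t_i)(t_b-t_j)$ with $b\notin\{i,j\}$ vanishes on $E_{q,3}$ and equals $q-2$ at $\mathbf 1-v$.

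What is still absent is a general construction of the correcting factor, valid for all $q\ge m$ and all $v$ on the top layer. For example, a form of degree $m-2$ vanishing on $E_{q,m-1}$ but not at $\mathbf 1-v$ would do; so would any other argument reaching degree exactly $2m-4$. That is precisely the nontrivial content of Molev's lemma. Without it the proposal is a reduction, not a proof.
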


\section{Differential form of the vanishing condition}
\label{sec-diff}

We begin with an elementary interpolation lemma.

\begin{lemma}
\label{simplex}
Let $Q(u_1,\ldots,u_q)$ be a polynomial of total degree at most $d$ such that $Q(u)=0$ for every $u\in\bZ_{\ge0}^q$ with $|u|\le d$.
Then $Q=0$.
\end{lemma}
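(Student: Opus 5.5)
We argue by induction on the number of variables $q$, with an inner induction on $d$; the case $q=0$ is trivial, since then $Q$ is a constant equal to its value at the empty point. It is also convenient to allow $d<0$: then $Q=0$ by convention.

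Let $q\ge1$ and write $Q$ as a polynomial in the last variable,
\[
Q(u_1,\ldots,u_q)=\sum_{j=0}^{d}Q_j(u_1,\ldots,u_{q-1})\,u_q^{\,j},
\qquad \deg Q_j\le d-j .
\]
Setting $u_q=0$ gives the polynomial $Q(u',0)=Q_0(u')$ in $q-1$ variables, of degree at most $d$. It vanishes at every $u'\in\bZ_{\ge0}^{q-1}$ with $|u'|\le d$. By the induction hypothesis on $q$, $Q(u',0)\equiv0$, so $u_q$ divides $Q$.

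Write $Q=u_q\,R$ with $\deg R\le d-1$. For $u\in\bZ_{\ge0}^q$ with $u_q\ge1$ and $|u|\le d$ we have $R(u)=0$. Now substitute $u_q=v_q+1$, that is, set $\tilde R(u_1,\ldots,u_{q-1},v_q)=R(u_1,\ldots,u_{q-1},v_q+1)$. This is a polynomial of degree at most $d-1$ vanishing at all points of $\bZ_{\ge0}^q$ with $|\cdot|\le d-1$. By induction on $d$ (with $q$ fixed), $\tilde R=0$, hence $R=0$ and $Q=0$.

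The base case $d=0$ is immediate: $Q$ is a constant equal to $Q(0)=0$.

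An alternative route is to note that the points $\{u\in\bZ_{\ge0}^q : |u|\le d\}$ are exactly $\binom{d+q}{q}$ in number, which equals the dimension of the space of polynomials of degree at most $d$ in $q$ variables. The statement is then unisolvence of this simplex grid, which can be seen via the binomial basis $\prod_i\binom{u_i}{k_i}$ with $|k|\le d$: its evaluation matrix on the grid is unitriangular. I prefer the shift-and-divide induction above, as it avoids any bookkeeping.

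The only step needing care is the divisibility-and-shift step, specifically checking that the shifted grid is exactly the simplex of size $d-1$. No real obstacle is expected.
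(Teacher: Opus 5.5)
Your proof is correct, but your main argument takes a different route from the paper's. The paper's proof is the one you list as the alternative: it expands $Q=\sum_{|\alpha|\le d}c_\alpha\binom{u}{\alpha}$ in the binomial basis. Evaluating at the grid points $\beta$ in order of increasing $|\beta|$ then gives a unitriangular system, because $\binom{\beta}{\alpha}=0$ unless $\alpha\le\beta$ componentwise and $\binom{\alpha}{\alpha}=1$.

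Your double induction is sound. You restrict to the facet $u_q=0$ to get $u_q\mid Q$, then shift $u_q=v_q+1$ to land on the simplex of size $d-1$. The case $(q,d)$ uses only the cases $(q-1,d)$ and $(q,d-1)$, so the nested induction is well founded. The shifted points $(u',v_q)$ with $|u'|+v_q\le d-1$ are sent exactly into the part of the original grid with $u_q\ge1$.

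The two approaches trade off as follows. The paper's is a single linear-algebra step, and it directly exhibits the inverse of the evaluation map as Newton interpolation: $c_\alpha$ is the iterated forward difference of $Q$ at $0$. Its cost is checking that the $\binom{u}{\alpha}$ with $|\alpha|\le d$ form a basis of the polynomials of degree at most $d$. Yours needs no auxiliary basis and proves only injectivity of evaluation, which is all the paper ever uses (Lemma \ref{differential-criterion}, Lemma \ref{mixed-bound}, Proposition \ref{no-total-derivative}).

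One point to make explicit in your version: passing from $u_qR(u)=0$ to $R(u)=0$ when $u_q\ge1$ needs the positive integers up to $d$ to be nonzero in $\K$. This holds under the standing assumption that $\K$ has characteristic $0$. The paper's argument uses the same assumption so that the binomial polynomials have coefficients in $\K$.
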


\begin{proof}
For $\alpha\in\bZ_{\ge0}^q$ put $\binom{u}{\alpha}=\prod_{i=1}^{q}\binom{u_i}{\alpha_i}$.
The polynomials $\binom{u}{\alpha}$, $|\alpha|\le d$, form a basis of the space of polynomials of total degree at most $d$.
Write $Q=\sum_{|\alpha|\le d}c_\alpha\binom{u}{\alpha}$.
Since $\binom{\beta}{\alpha}=0$ unless $\alpha\le\beta$ componentwise, and $\binom{\alpha}{\alpha}=1$, evaluation at $u=\beta$ in increasing order of $|\beta|$ gives a triangular system with unit diagonal for the coefficients $c_\alpha$.
Hence all $c_\alpha=0$.
\end{proof}

For $r\ge0$ define the polynomials
\begin{equation}
h_0(t)=1,
\qquad
h_r(t)=\prod_{j=0}^{r-1}(t+j-1)=(t-1)t(t+1)\cdots(t+r-2)
\quad(r\ge1),
\label{hr}
\end{equation}
and let $T$ be the linear operator on $S$ defined on monomials by
\begin{equation}
T(t_1^{\alpha_1}\cdots t_n^{\alpha_n})
=
h_{\alpha_1}(t_1)\cdots h_{\alpha_n}(t_n).
\label{T-def}
\end{equation}
Since $h_r(t)=t^r+(\text{terms of lower degree})$, the operator $T$ is unipotent with respect to the degree filtration; in particular $T$ is invertible and preserves the highest homogeneous component of every polynomial.

For $k\ge1$ put
\begin{equation}
\cD_k
=
\sum_{i=1}^{n}
\bigl(
t_i\partial_i^{k+1}-(k+1)\partial_i^{k}
\bigr),
\qquad
\partial_i=\frac{\partial}{\partial t_i}.
\label{Dk}
\end{equation}
Each $\cD_k$ is homogeneous of degree $-k$: it maps $S_d$ to $S_{d-k}$.
In one variable, for $r\ge k$,
\begin{equation}
\bigl(t\partial^{k+1}-(k+1)\partial^{k}\bigr)t^{r}
=
\frac{r!}{(r-k)!}\,(r-2k-1)\,t^{r-k},
\label{Dk-monomial}
\end{equation}
and the left-hand side is zero for $r<k$.

\begin{lemma}
\label{differential-criterion}
For $P\in S_{n-1}$, $n\ge2$, the following conditions are equivalent:
\begin{enumerate}
\item[(i)] $P\in K_n$;
\item[(ii)] $TP=P$;
\item[(iii)] $\cD_kP=0$ for every $k\ge1$;
\item[(iv)] $\cD_1P=\cD_2P=0$.
\end{enumerate}
\end{lemma}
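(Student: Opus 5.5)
The plan is to prove (i)$\Leftrightarrow$(ii) by evaluating at the points of $V_n$, (ii)$\Leftrightarrow$(iii) by expressing $T$ through the operators $\cD_k$, and (iii)$\Leftrightarrow$(iv) through the Witt commutation relations.

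\emph{(i)$\Leftrightarrow$(ii).} Write $a=1-b$ with $b\in\bZ_{\ge0}^n$ and $|b|\le n-2$, as in \eqref{Vn}. For every $r\ge0$,
\[
h_r(1-b)=(-b)(1-b)\cdots(r-2-b)=(-1)^r\,r!\binom{b}{r}.
\]
Hence for $P=\sum_{|\alpha|=n-1}c_\alpha t^\alpha$ we get
\[
(TP)(1-b)=\sum_\alpha c_\alpha(-1)^{|\alpha|}\alpha!\binom{b}{\alpha}.
\]
Each term vanishes unless $\alpha\le b$ componentwise. That would force $|b|\ge n-1$, which is impossible on $V_n$. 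So $TP$ vanishes identically on $V_n$.

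Consequently $P\in K_n$ if and only if $P-TP$ vanishes on $V_n$. Since $T$ preserves the top homogeneous component, $P-TP$ has total degree at most $n-2$. Put $Q(b)=(P-TP)(1-b)$. Then $Q$ has degree at most $n-2$ and vanishes at all $b\in\bZ_{\ge0}^n$ with $|b|\le n-2$. Lemma \ref{simplex} gives $Q=0$, that is, $TP=P$. The converse direction is immediate from the same computation.

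\emph{(iii)$\Leftrightarrow$(iv).} First verify on monomials, using \eqref{Dk-monomial}, the relation $[\cD_k,\cD_l]=(k-l)\cD_{k+l}$. Since $\cD_k$ is a sum of one-variable operators acting in different variables, it suffices to check this for the one-variable operators $L_k=t\partial^{k+1}-(k+1)\partial^k$ on $t^r$. Then $\cD_3=-[\cD_1,\cD_2]$, and $\cD_{k+1}=\tfrac{1}{k-1}[\cD_k,\cD_1]$ for $k\ge2$. By induction, $\cD_1P=\cD_2P=0$ implies $\cD_kP=0$ for all $k$.

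\emph{(ii)$\Leftrightarrow$(iii).} This is the main step, and everything reduces to one variable. As a consistency check, the coefficient of $t^{r-1}$ in $h_r$ is
\[
\sum_{j=0}^{r-1}(j-1)=\frac{r(r-3)}{2}.
\]
By \eqref{Dk-monomial} with $k=1$, this shows that the degree $d-1$ component of $TP-P$ equals $\tfrac12\cD_1P$.

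I would first try to show that $T=\exp(A)$ with $A=\sum_{k\ge1}c_k\cD_k$ and all $c_k\ne0$. The sum is finite on each $S_d$. Because the $\cD_k$ are sums of commuting one-variable pieces, the exponential factorizes over the variables, matching the product structure \eqref{T-def} of $T$. So this is a one-variable identity: $t^r\mapsto h_r(t)$ should equal $\exp\bigl(\sum_k c_kL_k\bigr)$. I would check it via the recursion $h_{r+1}(t)=(t+r-1)h_r(t)$, or via generating functions in $r$.

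Granting this, $\log T=A$ is a polynomial in $T-I$ on each $S_d$ (nilpotent there), so $TP=P$ implies $AP=0$. Comparing homogeneous components of $AP$ gives $\cD_kP=0$ for each $k$; the converse is clear.

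If the exact exponential form is hard to establish, the fallback is an induction on $k$. The aim is to show that the degree $d-k$ component of $TP-P$ equals $c_k\cD_kP$ plus a combination of products of the $\cD_j$ with $j<k$ applied to $P$, with $c_k\ne0$. Identifying this triangular structure, and the nonvanishing of the $c_k$, is the step I expect to be the main obstacle.
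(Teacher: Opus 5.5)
Your arguments for (i)$\Leftrightarrow$(ii) and (iii)$\Leftrightarrow$(iv) are correct and coincide with the paper's. There is one typo: $h_r(1-b)=\prod_{j=0}^{r-1}(j-b)$ ends with the factor $r-1-b$, not $r-2-b$; your closed form $(-1)^r r!\binom{b}{r}$ is right.

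The gap is in (ii)$\Leftrightarrow$(iii), which is the real content of the lemma. You leave it as a conjecture: $T=\exp\bigl(\sum_k c_k\cD_k\bigr)$ with all $c_k\ne0$, to be ``checked via the recursion or generating functions''. The identity is in fact true, but not for any reason you give. It also cannot be checked against $h_{r+1}=(t+r-1)h_r$, because you do not know the $c_k$.

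To see what is really involved, let everything act on $e^{tz}$:
\begin{itemize}
\item the one-variable operator $t\partial^{k+1}-(k+1)\partial^k$ becomes $F\mapsto z^{k+1}F'-(k+1)z^kF$, which is $\mathrm{ad}(z^{k+1}\partial_z)$ acting on vector fields $F\partial_z$;
\item $T$ becomes $F\mapsto(1-z)F(-\log(1-z))$, the pullback of $F\partial_z$ by the formal diffeomorphism $-\log(1-z)$.
\end{itemize}
So $\sum_k c_kz^{k+1}\partial_z$ has to be the vector field whose time-one flow is $-\log(1-z)$. Its coefficients $\frac12,\frac1{12},\frac1{48},\dots$ are only recursively determined. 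Establishing $T=\exp(A)$ therefore needs this identification together with the embedding of a tangent-to-identity formal diffeomorphism in a flow, and none of this is in your proposal. Your fallback induction just restates the same missing triangular structure.

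Your claim that \emph{all} $c_k\ne0$ is unsupported, but it is also unnecessary. The values $c_1=\frac12$ and $c_2=\frac1{12}$, which can be read off from $h_2,h_3$, together with your Witt step would already suffice. A smaller point: $T-I$ is nilpotent on polynomials of degree $\le d$, not on $S_d$, which $T$ does not preserve.

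The missing idea, used in the paper, is to avoid $\log T$ altogether and conjugate the Euler operator $E=\sum_i t_i\partial_i$ instead. In one variable, the generating function $T(e^{tz})=(1-z)^{1-t}$ immediately gives $T^{-1}\partial T=-\log(1-\partial)$ and $T^{-1}tT=t(1-\partial)+1$. Hence
\[
T^{-1}ET=E-\sum_{k\ge1}\frac{\cD_k}{k(k+1)},
\]
with explicit nonzero coefficients. If $TP=P$, then $\sum_k\cD_kP/(k(k+1))=dP-T^{-1}(dP)=0$. The summands lie in the distinct degrees $d-k$, so every $\cD_kP$ vanishes. Conversely, if all $\cD_kP=0$, then $E(TP)=T(EP)=d\,TP$. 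So $TP$ is homogeneous of degree $d$ with top component $P$, i.e.\ $TP=P$. This needs nothing beyond the two generating-function identities.
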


\begin{proof}
Put $d=n-1$.

\emph{Step 1: $TP$ vanishes on $V_n$ for every $P\in S_d$.}
Let $|\alpha|=d$ and $b\in\bZ_{\ge0}^n$ with $|b|\le d-1$.
Then $b_i<\alpha_i$ for some $i$, and
\[
h_{\alpha_i}(1-b_i)=\prod_{j=0}^{\alpha_i-1}(j-b_i)=0,
\]
since the factor with $j=b_i$ vanishes.
Hence $T(t^\alpha)$ vanishes at $1-b$, and by linearity $(TP)(a)=0$ for all $a\in V_n$.

\emph{Step 2: (i)$\Leftrightarrow$(ii).}
Let $P\in K_n$.
By Step 1 the polynomial $P-TP$ vanishes on $V_n$, and it has total degree at most $d-1$ because $T$ preserves the highest homogeneous component.
After the substitution $t_i=1-b_i$ it becomes a polynomial in $b$ of degree at most $d-1$ vanishing at all $b\in\bZ_{\ge0}^n$ with $|b|\le d-1$.
By Lemma \ref{simplex}, $P-TP=0$.
Conversely, if $TP=P$ then $P$ vanishes on $V_n$ by Step 1.

\emph{Step 3: a conjugation formula.}
Let $E=\sum_{i=1}^{n}t_i\partial_i$ be the Euler operator.
We claim that
\begin{equation}
T^{-1}ET
=
E-\sum_{k\ge1}\frac{\cD_k}{k(k+1)}.
\label{conjugation}
\end{equation}
Since $T$, $E$ and $\cD_k$ are sums of operators acting in a single variable, it suffices to verify \eqref{conjugation} in one variable $t$.
The exponential generating function of the polynomials \eqref{hr} is
\begin{equation}
T(e^{tz})
=
\sum_{r\ge0}h_r(t)\frac{z^r}{r!}
=
(1-z)^{1-t},
\label{genfun}
\end{equation}
as follows from the binomial series $(1-z)^{1-t}=\sum_r\binom{1-t}{r}(-z)^r$.
Differentiating \eqref{genfun} with respect to $t$ and $z$ gives
\[
\partial_t\bigl((1-z)^{1-t}\bigr)=-\log(1-z)\,(1-z)^{1-t},
\qquad
t(1-z)^{1-t}=\bigl((1-z)\partial_z+1\bigr)(1-z)^{1-t}.
\]
Since $\partial_z e^{tz}=te^{tz}$ and $z e^{tz}=\partial_t e^{tz}$, these identities mean
\begin{equation}
T^{-1}\partial T=-\log(1-\partial),
\qquad
T^{-1}tT=t(1-\partial)+1,
\label{conj-basic}
\end{equation}
where $\partial=\partial_t$.
Consequently
\begin{align*}
T^{-1}(t\partial)T
&=
\bigl(t(1-\partial)+1\bigr)\sum_{k\ge1}\frac{\partial^k}{k}\\
&=
t\partial
+\sum_{k\ge1}t\partial^{k+1}\Bigl(\frac1{k+1}-\frac1k\Bigr)
+\sum_{k\ge1}\frac{\partial^k}{k}\\
&=
t\partial-\sum_{k\ge1}\frac{t\partial^{k+1}-(k+1)\partial^{k}}{k(k+1)}.
\end{align*}
Summation over the variables proves \eqref{conjugation}.

\emph{Step 4: (ii)$\Leftrightarrow$(iii).}
Assume $TP=P$.
Since $P$ is homogeneous of degree $d$, $EP=dP$, and \eqref{conjugation} applied to $P=T^{-1}(TP)$ gives
\[
\sum_{k\ge1}\frac{\cD_kP}{k(k+1)}
=
EP-T^{-1}ETP
=
dP-T^{-1}EP
=
dP-d\,T^{-1}P
=
0,
\]
where we used $TP=P$, hence $T^{-1}P=P$.
The summands $\cD_kP$ have pairwise distinct degrees $d-k$, hence $\cD_kP=0$ for all $k\ge1$.
Conversely, if all $\cD_kP$ vanish, then by \eqref{conjugation} $T^{-1}ETP=EP=dP$, i.e.\ $E(TP)=d\,TP$.
Thus $TP$ is homogeneous of degree $d$; since its highest homogeneous component is $P$, we get $TP=P$.

\emph{Step 5: (iii)$\Leftrightarrow$(iv).}
A direct computation in one variable, followed by summation over $i$, gives
\begin{equation}
[\cD_k,\cD_l]=(k-l)\cD_{k+l},
\qquad k,l\ge1.
\label{Witt-relations}
\end{equation}
In particular $[\cD_1,\cD_k]=(1-k)\cD_{k+1}$, and $\cD_1P=\cD_2P=0$ implies successively $\cD_3P=\cD_4P=\cdots=0$.
\end{proof}

\begin{remark}
By \eqref{Witt-relations}, the operators $\cD_k$, $k\ge1$, span a Lie algebra isomorphic to the positive part of the Witt algebra.
Condition (iv) says that $P$ is annihilated by the two generators $\cD_1$, $\cD_2$ of this Lie algebra.
\end{remark}

\section{Specialization of a symmetric block of variables}
\label{sec-spec}

For $1\le a\le n$ let $S_a\subseteq S_n$ act on the first $a$ variables $t_1,\ldots,t_a$, and put
\begin{equation}
K_n^{S_a}
=
\{P\in K_n\mid P\text{ is symmetric in }t_1,\ldots,t_a\}.
\label{KnSa}
\end{equation}
Define the specialization map
\begin{equation}
\rho_a(P)
=
P(\underbrace{0,\ldots,0}_{a},t_{a+1},\ldots,t_n).
\label{rho}
\end{equation}

\begin{lemma}
\label{specialization}
The map $\rho_a\colon K_n^{S_a}\to\bC[t_{a+1},\ldots,t_n]$ is injective.
Moreover, if $q=n-a\ge2$, then $\rho_a(P)$ is a homogeneous polynomial of degree $n-1$ vanishing on $V_q$ for every $P\in K_n^{S_a}$.
\end{lemma}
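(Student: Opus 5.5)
The statement has two parts. The second is a direct consequence of the definitions. The first needs the differential form of the vanishing condition from Lemma \ref{differential-criterion}, namely $\cD_kP=0$ for all $k\ge1$.

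\emph{Second assertion.} Substituting $t_1=\cdots=t_a=0$ into a homogeneous polynomial of degree $n-1$ gives a homogeneous polynomial of degree $n-1$ in $t_{a+1},\ldots,t_n$. It remains to check the vanishing. Take a point $1-c\in V_q$ with $c\in\bZ_{\ge0}^q$ and $|c|\le q-2$. The point $(0,\ldots,0,1-c)\in\bZ^n$ equals $1-b$, where $b_i=1$ for $i\le a$ and $b_{a+j}=c_j$. Then
\[
|b|=a+|c|\le a+q-2=n-2,
\]
so $(0,\ldots,0,1-c)\in V_n$. Hence $\rho_a(P)(1-c)=P(0,\ldots,0,1-c)=0$.

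\emph{Injectivity: setup.} Let $P\in K_n^{S_a}$ with $\rho_a(P)=0$. Write $p_m=\sum_{i\le a}\partial_i^m$ for the block power sums of derivatives. For a partition $\lambda$ put $p_\lambda=\prod_j p_{\lambda_j}$. Let $R$ be any polynomial in $\partial_{a+1},\ldots,\partial_n$, and write $|_0$ for evaluation at $t_1=\cdots=t_a=0$.

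Since the $p_\lambda$ span the symmetric polynomials in $\partial_1,\ldots,\partial_a$, it suffices to prove
\[
(p_\lambda R P)|_0=0\quad\text{for all }\lambda\text{ and all }R.
\]
Indeed, the apolar pairing between symmetric polynomials in $t_1,\ldots,t_a$ and symmetric differential operators in $\partial_1,\ldots,\partial_a$ is nondegenerate. So these equalities force every Taylor coefficient of $P$ in the block variables to vanish, because $P$ is symmetric in that block. Therefore $P=0$.

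\emph{Injectivity: induction on $\ell(\lambda)$.} The base case $\ell(\lambda)=0$ is $(RP)|_0=R\,\rho_a(P)=0$.

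For the inductive step, split
\[
\cD_k=\sum_{i\le a}t_i\partial_i^{k+1}-(k+1)p_k+\cD_k',
\]
where $\cD_k'$ involves only the outer variables. Take $M=p_\mu$ with $\ell(\mu)=\ell-1$ and any $R$. Apply $MR$ to the identity $\cD_kP=0$ and evaluate at the block origin.

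1. In the block term, move each $t_i$ to the left, where it evaluates to $0$. What remains is commutators. Summed over $i$, these are
\[
\sum_i[p_m,t_i]\partial_i^{k+1}=m\,p_{m+k}.
\]
So they produce operators $p_{\mu'}R$ in which two factors $p_m,p_k$ merge into one $p_{m+k}$. These have fewer than $\ell$ block factors.

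2. In the outer term, $MR\cD_k'$ equals $M\cD_k'R$ plus $M[R,\cD_k']$. Both pieces are operators $M\cdot(\text{outer differential operator})$ applied to $P$. The outer parts may involve multiplication by $t_j$, $j>a$, but evaluation only sets the block variables to zero. So these terms are again covered by the inductive hypothesis for $\ell-1$, with an arbitrary outer operator in place of $R$. To apply the hypothesis cleanly, I would state it for arbitrary differential operators with polynomial coefficients in the outer variables, not just constant-coefficient $R$.

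Together this gives
\[
(k+1)\,(p_kMRP)|_0=(\text{terms with fewer block factors})=0,
\]
which completes the induction.

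The main obstacle is this bookkeeping in the inductive step. One must check that every commutator term genuinely lowers the number of block power-sum factors, or at least does not raise it. One must also check that the hypothesis is formulated broadly enough, with outer coefficients allowed, to absorb the $\cD_k'$ contributions. I would verify the commutator identity $[\partial^m,t]=m\partial^{m-1}$ carefully, since this identity produces the merging of two power sums into one.
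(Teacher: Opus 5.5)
Your proof is correct. Your treatment of the second assertion is the same as the paper's. For injectivity, however, you take a genuinely different route.

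\textbf{The paper's route.} It passes to jet variables $u_i^{(r)}$ and turns the $\cD_k$ into locally nilpotent derivations $\delta_k$. It then identifies the first $a$ jet families into a single family and integrates the $\delta_k$ to flows $\exp(z\delta_k)$. These flows move any point with $u^{(0)}\neq0$ to a point where $u^{(1)}=\cdots=u^{(d)}=0$. At such a point the diagonalized polynomial equals $(u^{(0)})^aF_{\rho_a(P)}=0$. Zariski density and polarization then give $P=0$.

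\textbf{Your route.} You work infinitesimally. By induction on $\ell(\lambda)$ you show that all block moments $(p_\lambda(\partial)P)|_{t_1=\cdots=t_a=0}$ vanish. The recursion comes from $(p_\mu\cD_kP)|_0=0$ together with $\sum_{i\le a}[p_m,t_i]\partial_i^{k+1}=m\,p_{m+k}$. You then conclude by the nondegeneracy of the apolar pairing on $S_a$-invariants.

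\textbf{What each buys.} Your version is a finite, purely linear-algebraic argument, with no flows, density or polarization, and it shows exactly where the symmetry of $P$ in $t_1,\ldots,t_a$ is used. The paper's version is more geometric: it exhibits invariance under a unipotent group and a normal form, which explains conceptually why the specialization determines $P$. The two arguments are dual. Up to nonzero scalars, the coefficients of the diagonalization $G$ are the coefficients, in the outer variables, of $(m_\nu(\partial_1,\ldots,\partial_a)P)|_0$ with $m_\nu$ monomial symmetric. These are the same data your $p_\lambda$-moments control.

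\textbf{Your bookkeeping worry.} It resolves cleanly. Each commutator term arises by merging $p_{\mu_j}$ and $p_k$ into $p_{\mu_j+k}$, so it has exactly $\ell-1$ block factors. The auxiliary operator $R$ and the strengthened hypothesis with outer coefficients are unnecessary. Setting $t_1=\cdots=t_a=0$ commutes with every operator in the outer variables, so $(p_\mu\cD_k'P)|_0=\cD_k'\bigl((p_\mu P)|_0\bigr)=0$ follows directly from the hypothesis for $\mu$. The induction can therefore be run with $R=1$ throughout. Also, $R=1$ already suffices for your final apolarity step.
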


\begin{proof}
The second assertion is immediate.
If $(1-b_1,\ldots,1-b_q)\in V_q$, then $|b|\le q-2$, so
\[
(\underbrace{0,\ldots,0}_{a},1-b_1,\ldots,1-b_q)
=
(\underbrace{1-1,\ldots,1-1}_{a},1-b_1,\ldots,1-b_q)
\in V_n,
\]
since $a+|b|\le a+q-2=n-2$.
Hence $\rho_a(P)$ vanishes on $V_q$.

We prove injectivity.
Put $d=n-1$ and write $P=\sum_{|\alpha|=d}c_\alpha t^\alpha$.
Introduce independent variables $u_i^{(r)}$, $1\le i\le n$, $0\le r\le d$, (``jet variables'') and put
\begin{equation}
F_P
=
\sum_{\alpha}c_\alpha\,u_1^{(\alpha_1)}\cdots u_n^{(\alpha_n)}.
\label{FP}
\end{equation}
Thus $F_P$ is the differential polynomial whose symbol is $P$, written in the variables $u_i^{(r)}$; it is multilinear in the $n$ families $u_i^{(\cdot)}$.

For $1\le k\le d$ define a derivation $\delta_k$ of the polynomial ring in the jet variables by
\begin{equation}
\delta_k\bigl(u_i^{(r)}\bigr)
=
\begin{cases}
\Bigl(\dbinom{r}{k+1}-\dbinom{r}{k}\Bigr)u_i^{(r-k)},
& r\ge k,\\[3mm]
0,& r<k.
\end{cases}
\label{delta-def}
\end{equation}
By \eqref{Dk-monomial},
\[
\frac1{(k+1)!}\bigl(t\partial^{k+1}-(k+1)\partial^k\bigr)t^r
=
\frac{r!\,(r-2k-1)}{(k+1)!\,(r-k)!}\,t^{r-k}
=
\Bigl(\binom{r}{k+1}-\binom{r}{k}\Bigr)t^{r-k},
\]
and therefore, for every $P\in S$,
\begin{equation}
\delta_k(F_P)=\frac1{(k+1)!}F_{\cD_kP}.
\label{delta-D}
\end{equation}
Every $\delta_k$ is locally nilpotent, since it strictly lowers the total jet order $\sum r$ of a monomial.
Hence the exponentials $\exp(z\delta_k)$, $z\in\bC$, are well-defined automorphisms (polynomial flows) of the ring of jet variables.

If $P\in K_n$, then $\cD_kP=0$ for all $k$ by Lemma \ref{differential-criterion}, and \eqref{delta-D} gives $\delta_k(F_P)=0$.
Thus $F_P$ is invariant under all flows $\exp(z\delta_k)$.

Now let $P\in K_n^{S_a}$ with $\rho_a(P)=0$.
Identify the first $a$ jet families $u_1^{(r)},\ldots,u_a^{(r)}$ with a single family $u^{(r)}$, $0\le r\le d$, and let $G$ be the polynomial obtained from $F_P$ by this identification.
The identification commutes with every $\delta_k$, because \eqref{delta-def} acts on each family in the same way; hence $\delta_k(G)=0$ and $G$ is invariant under all flows $\exp(z\delta_k)$.

Take an arbitrary point of the jet space with $u^{(0)}\ne0$.
By \eqref{delta-def},
\[
\delta_k\bigl(u^{(k)}\bigr)=-u^{(0)},
\qquad
\delta_k\bigl(u^{(r)}\bigr)=0\quad(r<k),
\]
so that $\exp(z\delta_k)$ sends $u^{(k)}$ to $u^{(k)}-zu^{(0)}$ and leaves $u^{(0)},\ldots,u^{(k-1)}$ unchanged.
Applying successively $\exp(z_1\delta_1),\ldots,\exp(z_d\delta_d)$ with $z_k=u^{(k)}/u^{(0)}$ (the value of $u^{(k)}$ taken at the $k$-th step), we reach a point at which
\[
u^{(1)}=\cdots=u^{(d)}=0 .
\]
The flows may change the higher jets and the remaining jet families, but this causes no difficulty.

Since $F_P$ is multilinear in the first $a$ families, $G$ is homogeneous of degree $a$ in the variables $u^{(r)}$, and the only monomials of $G$ that survive at a point with $u^{(1)}=\cdots=u^{(d)}=0$ are those containing $(u^{(0)})^a$.
Their coefficients are exactly the coefficients of $P$ at the monomials with $\alpha_1=\cdots=\alpha_a=0$, i.e.\ the coefficients of $\rho_a(P)$.
Hence the value of $G$ at the resulting point equals
\[
(u^{(0)})^a\,F_{\rho_a(P)}(\text{remaining jet families})=0 .
\]
Since $G$ is invariant under the flows used, $G$ vanishes at every point with $u^{(0)}\ne0$, which is a Zariski dense set.
Thus $G=0$.

Finally, $F_P$ is symmetric and multilinear in its first $a$ families, and $G$ is its diagonalization.
By polarization, $a!\,F_P$ is recovered from $G$, so $F_P=0$ and $P=0$.
\end{proof}

\section{Mixed symmetry components}
\label{sec-mixed}

For $b\ge0$ put
\begin{equation}
H_b(z)=\prod_{j=1}^{b}\frac1{1-z^j},
\qquad
H_0(z)=1,
\qquad
p_b(r)=[z^r]H_b(z),
\label{Hb}
\end{equation}
so that $p_b(r)$ is the number of partitions of $r$ into at most $b$ parts, i.e.\ the dimension of the space of symmetric homogeneous polynomials of degree $r$ in $b$ variables.
We put $p_b(r)=0$ for $r<0$.

Let $\lambda\vdash n$.
Remove from the Young diagram of $\lambda$ all columns of length at least $3$; let their lengths be $c_1,\ldots,c_\ell$ ($c_i\ge3$), and let the remaining two rows have lengths $a\ge b\ge0$.
Put
\begin{equation}
C=c_1+\cdots+c_\ell,
\qquad
\delta=\sum_{i=1}^{\ell}\binom{c_i}{2},
\qquad
n=a+b+C,
\qquad
\delta\ge C.
\label{lambda-data}
\end{equation}
Choose disjoint blocks of variables of sizes $a,b,c_1,\ldots,c_\ell$ and put
\begin{equation}
G_\lambda
=
S_a\times S_b\times S_{c_1}\times\cdots\times S_{c_\ell}\subseteq S_n,
\qquad
\chi_\lambda
=
\bfe\boxtimes\bfe\boxtimes\sgn\boxtimes\cdots\boxtimes\sgn.
\label{G-chi}
\end{equation}
For an $S_n$-module $M$ put
\begin{equation}
M^{\chi}
=
\{v\in M\mid gv=\chi_\lambda(g)v\text{ for all }g\in G_\lambda\}.
\label{chi-subspace}
\end{equation}
Thus the elements of $M^{\chi}$ are symmetric in each of the first two blocks and alternating in each of the column blocks.
We call $M^\chi$ the \emph{mixed symmetry component} of $M$ associated with $\lambda$.

\begin{lemma}
\label{mixed-bound}
With the notation \eqref{lambda-data}--\eqref{chi-subspace},
\begin{equation}
\dim K_n^{\chi}
\le
[z^{n-1}]
\Bigl(
z^{\delta}\prod_{j=1}^{n}(1-z^j)\,
H_a(z)H_b(z)\prod_{i=1}^{\ell}H_{c_i}(z)
\Bigr).
\label{mixed-estimate}
\end{equation}
\end{lemma}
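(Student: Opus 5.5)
The plan is to bound $\dim K_n^{\chi}$ by embedding $K_n^{\chi}$ into a space of polynomials in fewer variables, using Lemma \ref{specialization} for the first (largest) symmetric block of size $a$. Then I would subtract the number of independent linear conditions imposed by the vanishing on $V_q$, $q=n-a$, and count those conditions exactly with Molev's homogeneous interpolation lemma (Theorem \ref{Molev-interpolation}). The right-hand side of \eqref{mixed-estimate} should come out as a difference of two such counts, written as a single coefficient extraction.

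\emph{Step 1 (specialization).} Every $P\in K_n^{\chi}$ is symmetric in the first block, so $P\in K_n^{S_a}$. By Lemma \ref{specialization}, $\rho_a$ is injective on $K_n^{\chi}$. Its image consists of homogeneous polynomials of degree $n-1$ in the $q$ remaining variables that vanish on $V_q$. Since $\rho_a$ commutes with $S_b\times\prod S_{c_i}$, these polynomials are still symmetric in the $b$-block and alternating in each column block. (If $q\le1$ the statement is checked directly.) Hence $\dim K_n^{\chi}\le\dim W-\operatorname{rank}(\mathrm{ev})$. Here $W$ is the space of degree $n-1$ polynomials in $q$ variables with this mixed symmetry, and $\mathrm{ev}\colon W\to\bC^{V_q}$ is the evaluation map.

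\emph{Step 2 (counting $W$).} An alternating polynomial in a block of size $c_i$ is the Vandermonde of that block times a symmetric polynomial. Therefore $W\simeq$ (polynomials of degree $n-1-\delta$ symmetric in each of the blocks $b,c_1,\dots,c_\ell$). This gives
\[
\dim W=[z^{n-1}]\,z^{\delta}H_b(z)\prod_i H_{c_i}(z).
\]

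\emph{Step 3 (rank of the conditions).} The image of $\mathrm{ev}$ lies in the $\chi$-isotypic functions on $V_q$, for the group $S_b\times\prod S_{c_i}$ acting on the set $V_q$ by permuting coordinates. I would show the image is all of this space. By Theorem \ref{Molev-interpolation} with $m=q$, every function on $V_q=E_{q,q}$ is the restriction of a homogeneous polynomial of degree $n-1$, provided $n-1\ge 2q-4$. Applying the $\chi$-projector $\frac1{|G|}\sum_g\chi(g)^{-1}g$ to such a polynomial keeps it in $W$ and maps its restriction to the projected function, so $\mathrm{ev}$ is onto. The dimension of $\chi$-functions on $V_q$ is the number of $V_q$-points modulo the group whose stabilizer character is trivial. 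Concretely, these are the points $1-u$ with $|u|\le q-2$, where $u$ is weakly ordered on the $b$-block and strictly ordered (distinct entries) on each column block. Via the shift by the staircase $0,1,\dots,c_i-1$ on column blocks, this count equals a sum of coefficients of $z^{\delta}H_b\prod_iH_{c_i}$ up to degree $q-2$.

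\emph{Step 4 (assembling).} The last step is a generating function identity. It should show that $\dim W$ minus the Step 3 count equals the right side of \eqref{mixed-estimate}, with the factor $\prod_{j\le n}(1-z^j)H_a(z)$ accounting for the truncation. I expect the main obstacle to be the hypothesis $n-1\ge 2q-4$ in Step 3, i.e.\ $a\ge b+C-3$, which can fail when there are many long columns. I would first try to reorganize the specialization so that the largest admissible block is removed. Failing that, I would iterate the specialization on a further symmetric block and apply Theorem \ref{Molev-interpolation} to a smaller set $E_{q,m}$ with $m<q$, for which the degree condition $r\ge 2m-4$ holds. This second route would change the truncation bookkeeping in Step 4, which then has to be redone so that it still matches the coefficient formula.
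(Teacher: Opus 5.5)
Your Steps 1, 2 and 4 are sound. In fact Step 4 needs no reworking for any $\lambda$. Write $F=z^{\delta}H_b(z)\prod_iH_{c_i}(z)$. The right-hand side of \eqref{mixed-estimate} is $[z^{n-1}]F\prod_{j=a+1}^{n}(1-z^j)$. Since $n-1-\delta\le a+b-1\le 2a-1$, only the linear terms $-z^j$ of the product contribute. So the right-hand side always equals $[z^{n-1}]F-\sum_{i=0}^{q-2}[z^i]F$, which is exactly $\dim W$ minus your count of $\chi$-orbits on $V_q$. Given Step 1, the lemma therefore reduces to your Step 3 claim that $\mathrm{ev}$ maps $W$ onto the $\chi$-isotypic functions on $V_q$.

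\textbf{The gap is Step 3.} Theorem \ref{Molev-interpolation} on $E_{q,q}=V_q$ needs $n-1\ge 2q-4$, i.e.\ $a\ge b+C-3$. This fails in non-vacuous cases. For $\lambda=(6,6,2)$ one has $a=b=4$, $c_1=c_2=3$, $q=10$ and $n-1=13<16$, and there are $13$ $\chi$-orbits in $V_{10}$.

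Neither fallback repairs this. The $a$-block is already the largest symmetric block, and the column blocks cannot be specialized. Iterating Lemma \ref{specialization} is not possible either. Its injectivity rests on $\cD_kP=0$, and Lemma \ref{differential-criterion} gives this only for forms whose degree is one less than the number of variables. After the $a$-block is set to zero you have a form of degree $n-1$ in $q<n$ variables, and vanishing on $V_q$ no longer forces $\cD_kQ=0$. For example, $Q=t_1^2-t_1t_2$ vanishes on $V_2=\{(1,1)\}$, but $\cD_1Q=2t_2$.

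\textbf{The missing idea} is to never interpolate in the column variables. Filter $W$ by the total degree $s$ in the column variables $v$. Write the top component of a kernel element as $Q_s=\sum_jA_j(v)B_j(w)$, with the $A_j$ independent and alternating and $B_j\in\Sym_r(w)$, where $r=n-1-s$. Put $m=q-s$.

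For $w=1-u\in E_{b,m}$ and any $v'\in\bZ_{\ge0}^{C}$ with $|v'|\le s$, the point $(w,1-v')$ lies in $V_q$. So $Q(w,\cdot)$, which has degree at most $s$ in $v$, vanishes identically by Lemma \ref{simplex}. Hence $B_j|_{E_{b,m}}=0$.

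Alternation forces $s\ge\delta\ge C$, so $m\le b\le a$, and for $m\ge2$ you get $r=a+m-1\ge 2m-1\ge 2m-4$. Thus Molev's lemma is only ever applied to the $b$-block, where its hypothesis holds automatically. It shows that the admissible $B$'s of degree $r$ form a space of dimension exactly $p_b(r)-\sum_{j\le m-2}p_b(j)$. Summing over $s$ reproduces \eqref{mixed-estimate}. Combined with your reduction, this argument also proves the surjectivity you wanted, for every $\lambda$.
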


\begin{proof}
\emph{The case $a=0$.}
Then $b=0$ and $C=n$.
A polynomial alternating in a block of $c_i$ variables is divisible by the Vandermonde polynomial of that block, of degree $\binom{c_i}{2}$.
Hence every element of $K_n^\chi$ is divisible by a polynomial of degree $\delta\ge C=n>n-1$, and $K_n^\chi=0$.
The right-hand side of \eqref{mixed-estimate} is zero as well, since $\delta\ge n$.

\emph{The case $a\ge1$.}
Put $d=n-1$ and $q=n-a=b+C$.
By Lemma \ref{specialization}, setting the variables of the $a$-block equal to zero defines an injective linear map from $K_n^\chi$ to the space of homogeneous polynomials $Q=Q(w,v)$ of degree $d$ in the remaining $q$ variables, where $w=(w_1,\ldots,w_b)$ are the variables of the second symmetric block and $v=(v_1,\ldots,v_C)$ are the variables of the column blocks.
Each such $Q$ is symmetric in $w$, alternating in each column block, and, if $q\ge2$, vanishes on $V_q$.

Filter the space of these polynomials by the total degree in $v$.
Let $Q\ne0$ and let $s$ be the largest $v$-degree occurring in $Q$.
The component of $v$-degree $s$ can be written as
\begin{equation}
Q_s(w,v)=\sum_jA_j(v)B_j(w),
\label{leading-column}
\end{equation}
where the $A_j$ are linearly independent homogeneous polynomials of degree $s$ in $v$, alternating in each column block, and the $B_j$ are symmetric homogeneous polynomials of degree
\begin{equation}
r=d-s
\label{r-def}
\end{equation}
in $w$.
Since $A_j$ is divisible by the product of the Vandermonde polynomials of the column blocks, $s\ge\delta\ge C$.
Put
\begin{equation}
m=q-s=b+C-s\le b .
\label{m-def}
\end{equation}

\emph{Vanishing of the $B_j$.}
Suppose $m\ge2$ and $C\ge1$.
Take $w=(1-u_1,\ldots,1-u_b)\in E_{b,m}$, so that $|u|\le m-2$.
For every $v'\in\bZ_{\ge0}^C$ with $|v'|\le s$ we have $|u|+|v'|\le m-2+s=q-2$, hence $(w,1-v')\in V_q$ and $Q(w,1-v')=0$.
For fixed $w$, the polynomial $Q(w,v)$ has degree at most $s$ in $v$; by Lemma \ref{simplex}, applied after the substitution $v=1-v'$, it is zero as a polynomial in $v$.
In particular its component \eqref{leading-column} of $v$-degree $s$ is zero, and the linear independence of the $A_j$ gives $B_j(w)=0$ for all $j$.
Thus
\begin{equation}
B_j\big|_{E_{b,m}}=0\qquad\text{for all }j.
\label{Bj-vanish}
\end{equation}
If $C=0$, then $s=0$, $m=q=b$, and \eqref{Bj-vanish} follows directly by evaluating $Q=Q(w)$ on $V_b=E_{b,b}$.
If $m\le1$, then $E_{b,m}=\varnothing$ and \eqref{Bj-vanish} is an empty condition.

\emph{Counting the $B_j$.}
By \eqref{r-def} and \eqref{m-def},
\begin{equation}
r=d-s=n-1-(q-m)=a+m-1.
\label{r-relation}
\end{equation}
If $m\ge2$, then $a\ge b\ge m$ by \eqref{m-def}, so $r=a+m-1\ge2m-1\ge2m-4$, and Theorem \ref{Molev-interpolation} applies to $E_{b,m}$ and degree $r$.
The $S_b$-orbits on $E_{b,m}$ correspond to the partitions of the integers $0\le j\le m-2$ into at most $b$ parts; hence the space of $S_b$-invariant functions on $E_{b,m}$ has dimension $\sum_{j=0}^{m-2}p_b(j)$.
By Theorem \ref{Molev-interpolation} every function on $E_{b,m}$ is the restriction of a homogeneous polynomial of degree $r$, and averaging over $S_b$ shows that every $S_b$-invariant function is the restriction of a symmetric homogeneous polynomial of degree $r$.
Therefore the restriction map from $\Sym_r(w)$ (symmetric homogeneous polynomials of degree $r$ in $w$) to $S_b$-invariant functions on $E_{b,m}$ is surjective, and the space of $B\in\Sym_r(w)$ satisfying $B|_{E_{b,m}}=0$ has dimension
\begin{equation}
p_b(r)-\sum_{j=0}^{m-2}p_b(j)
=
p_b(r)-\sum_{j=0}^{r-a-1}p_b(j),
\label{kernel-dim}
\end{equation}
where we used $m-2=r-a-1$ from \eqref{r-relation}.
For $m\le1$ the sum is empty and \eqref{kernel-dim} equals $p_b(r)$; this agrees with the absence of vanishing conditions in that case.

\emph{A generating function.}
Put
\begin{equation}
U_a(z)=\prod_{j=a+1}^{\infty}(1-z^j).
\label{Ua}
\end{equation}
Since $r=a+m-1\le a+b-1\le2a-1$, no product of two or more nonconstant terms of $U_a(z)$ contributes to the coefficient of $z^r$ in $H_b(z)U_a(z)$.
Therefore
\begin{equation}
[z^r]\,H_b(z)U_a(z)
=
p_b(r)-\sum_{j=a+1}^{r}p_b(r-j)
=
p_b(r)-\sum_{j=0}^{r-a-1}p_b(j),
\label{HbUa}
\end{equation}
which is \eqref{kernel-dim}.

\emph{Conclusion.}
The Hilbert series of the space of polynomials in $v$ alternating in each column block is
\begin{equation}
z^{\delta}\prod_{i=1}^{\ell}H_{c_i}(z),
\label{alt-Hilbert}
\end{equation}
because such a polynomial is the product of the Vandermonde polynomials of the blocks, of total degree $\delta$, and a polynomial symmetric in each block.
Passing to the associated graded space with respect to the $v$-degree, we see that $\dim K_n^\chi$ is bounded by the sum over $s$ of the products of $[z^s]$ of \eqref{alt-Hilbert} and \eqref{kernel-dim} with $r=d-s$; by \eqref{HbUa} this sum is
\begin{equation}
\dim K_n^\chi
\le
[z^{d}]
\Bigl(
z^{\delta}\prod_{i=1}^{\ell}H_{c_i}(z)\,H_b(z)U_a(z)
\Bigr).
\label{before-U}
\end{equation}
Finally, $U_a(z)=H_a(z)\prod_{j=1}^{\infty}(1-z^j)$, and since $d=n-1$ the factors $(1-z^j)$ with $j>n$ do not affect the coefficient of $z^d$.
Hence \eqref{before-U} is \eqref{mixed-estimate}.
\end{proof}

\begin{lemma}
\label{detection}
Let $\lambda\vdash n$ and let $G_\lambda$, $\chi_\lambda$ be as in \eqref{G-chi}.
If $S^\lambda$ is the irreducible $S_n$-module (Specht module) corresponding to $\lambda$, then $(S^\lambda)^\chi\ne0$.
\end{lemma}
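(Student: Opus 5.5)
By Frobenius reciprocity, $\dim (S^\lambda)^\chi$ equals the multiplicity of $S^\lambda$ in $\Ind_{G_\lambda}^{S_n}\chi_\lambda$. So it suffices to show that $S^\lambda$ occurs in
\[
M=\Ind_{S_a\times S_b}^{S_{a+b}}(\bfe\boxtimes\bfe)\;\circ\;\sgn_{c_1}\circ\cdots\circ\sgn_{c_\ell},
\]
where $\circ$ denotes induction product. In symmetric function language the Frobenius characteristic of $M$ is
\[
h_a\,h_b\,e_{c_1}\cdots e_{c_\ell}
\]
(here $h,e$ are the complete and elementary symmetric functions, not the polynomials of \eqref{hr}). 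We must show that the coefficient of the Schur function $s_\lambda$ in this product is positive.

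We expand the product with the Pieri rules, in a chosen order, and exhibit one sequence of Pieri steps that ends at $\lambda$.

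\emph{Step 1.} Start with $h_ah_b$. By Pieri, it contains $s_{(a,b)}$ with coefficient $1$; this is the standard fact $s_{(a,b)}\le h_ah_b$, valid since $a\ge b$.

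\emph{Step 2.} Multiply successively by $e_{c_1},\ldots,e_{c_\ell}$, ordered so that $c_1\ge c_2\ge\cdots\ge c_\ell$. Multiplying by $e_c$ adds a vertical strip of $c$ boxes, at most one per row. At the $i$-th step, add exactly one new column of length $c_i$ placed to the left of all existing boxes. Concretely, shift the current diagram right by one column and insert a column of length $c_i$ in front. In terms of shapes, if the current partition $\mu$ has conjugate $\mu'$, the new shape has conjugate obtained by inserting $c_i$ into $\mu'$ as a new largest part. This is legitimate because $c_i\ge3\ge$ every column length of $(a,b)$ and $c_i\ge c_{i+1}$.

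We must check that the skew shape $\nu/\mu$ is a vertical strip. Here $\nu_j=\mu_j+1$ for $j\le c_i$ and $\nu_j=\mu_j$ otherwise. This requires $\mu$ to have at most $c_i$ rows, which holds since all columns of $\mu$ have length at most $c_{i-1}$, or at most $2$ initially, and $c_i\ge$ that bound. Because the $c_i$ are sorted decreasingly, each added column should be a new column on the right of the earlier long columns, i.e.\ the conjugate gets the part $c_i$ appended. It is cleanest to phrase everything via conjugate partitions: $\mu'$ runs through the multiset union of $(a,b)'$ with $\{c_1,\ldots,c_i\}$.

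The shape after all steps has conjugate equal to the multiset union of the columns of $(a,b)$ and $\{c_1,\ldots,c_\ell\}$, which is exactly $\lambda'$ by construction in \eqref{lambda-data}. Since every Pieri coefficient is nonnegative and we have found a path with coefficient $1$, the coefficient of $s_\lambda$ in $h_ah_be_{c_1}\cdots e_{c_\ell}$ is at least $1$, so $(S^\lambda)^\chi\ne0$.

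The main obstacle is the bookkeeping in Step 2: confirming that adding a column of length $c_i$ to a partition whose columns are all of length $\le c_i$ is a vertical strip. Inserting a part $c$ into $\mu'$ increases $\mu_j$ by one exactly for $j\le c$. Since $\ell(\mu)\le c$, this is a valid partition and a vertical strip. I would state this as a one-line sub-claim.

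An alternative route, if one prefers to avoid symmetric functions, is to take a Young symmetrizer-type vector: a polytabloid for a tableau whose first two rows' entries in the $a$- and $b$-parts are symmetrized and whose long columns are the column blocks, symmetrized over the row groups. Nonvanishing is then the usual Young-symmetrizer argument, but the Pieri argument is cleaner.
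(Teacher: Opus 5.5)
Your proof is correct and takes essentially the paper's route: Frobenius reciprocity reduces the claim to positivity of the coefficient of $s_\lambda$ in $h_ah_be_{c_1}\cdots e_{c_\ell}$, and an explicit Pieri path to $\lambda$ establishes it. The paper only orders the steps differently, adding the vertical strips $c_1,\ldots,c_\ell$ to the empty diagram first and then horizontal strips of lengths $a$ and $b$ to rows $1$ and $2$.

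There is one slip in your Step 2, and it does not affect validity. With $c_1\ge\cdots\ge c_\ell$, your claim that $\mu$ has at most $c_i$ rows fails as soon as $c_i<c_1$, and the ``insert a column in front'' picture would need increasing order. Neither is needed: inserting a part $c$ into $\mu'$ always yields $\nu_j=\mu_j+[j\le c]$, which is automatically a partition and differs from $\mu$ by a vertical strip.
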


\begin{proof}
By Frobenius reciprocity,
\[
\dim(S^\lambda)^\chi
=
\dim\Hom_{G_\lambda}\bigl(\chi_\lambda,\Res^{S_n}_{G_\lambda}S^\lambda\bigr)
=
\dim\Hom_{S_n}\bigl(\Ind_{G_\lambda}^{S_n}\chi_\lambda,S^\lambda\bigr),
\]
and $\Ind_{G_\lambda}^{S_n}\chi_\lambda$ corresponds, under the characteristic map, to the product $h_ah_be_{c_1}\cdots e_{c_\ell}$ of complete and elementary symmetric functions \cite{Mac}.
By the Pieri rules, $s_\lambda$ occurs in this product with positive multiplicity: starting from the empty diagram, add the vertical strips of lengths $c_1,\ldots,c_\ell$ (this gives the diagram formed by the columns of $\lambda$ of length at least $3$), then add a horizontal strip of length $a$ to the first row and a horizontal strip of length $b$ to the second row.
The result is the diagram of $\lambda$.
Hence $(S^\lambda)^\chi\ne0$.
\end{proof}

\section{Proof of Theorem \ref{main-thm}}
\label{sec-proof}

\subsection{The inclusion \texorpdfstring{$L_n\subseteq K_n$}{Ln subseteq Kn}}

Consider the Laurent polynomial algebra $A=\bC[z,z^{-1}]$ with the derivation $D=z\,\frac{d}{dz}$ and the Novikov product $f\circ g=fD(g)$.
Put $e_r=z^r$, $r\in\bZ$.
Then $D(e_r)=re_r$, $e_r\circ e_s=se_{r+s}$ and
\begin{equation}
[e_r,e_s]=(s-r)e_{r+s},
\label{Witt}
\end{equation}
so $A^{(-)}$ is the Witt algebra.
Let
\begin{equation}
W_{\le1}=\Span\{e_r\mid r\le1\}.
\label{W1}
\end{equation}
This is a Lie subalgebra of $A^{(-)}$: if $r,s\le1$ and $r\ne s$, then one of $r,s$ is at most $0$, so $r+s\le1$; if $r=s$ the bracket \eqref{Witt} is zero.

Let $f\in\Nov(n)$ be a Lie element and $a\in V_n$.
Since $a_i\le1$, the Lie subalgebra of $A^{(-)}$ generated by $e_{a_1},\ldots,e_{a_n}$ is contained in $W_{\le1}$; in particular $f(e_{a_1},\ldots,e_{a_n})\in W_{\le1}$.
On the other hand, the substitution $x_i\mapsto e_{a_i}$ sends $x_i^{(\alpha_i)}$ to $a_i^{\alpha_i}e_{a_i}$, hence
\begin{equation}
f(e_{a_1},\ldots,e_{a_n})=P_f(a)\,e_{a_1+\cdots+a_n}.
\label{Witt-evaluation}
\end{equation}
Since $a_1+\cdots+a_n\ge2$, the element $e_{a_1+\cdots+a_n}$ does not belong to $W_{\le1}$.
Therefore \eqref{Witt-evaluation} lies in $W_{\le1}$ only if $P_f(a)=0$.
Thus $L_n\subseteq K_n$.

\subsection{The inclusion \texorpdfstring{$K_n\subseteq L_n$}{Kn subseteq Ln}}
Fix $\lambda\vdash n$ and let $a,b,c_1,\ldots,c_\ell$, $G_\lambda$, $\chi_\lambda$ be as in Section \ref{sec-mixed}.

The Hilbert series of the mixed symmetry component $S^\chi$ of the polynomial ring is
\begin{equation}
\Hilb(S^\chi,z)
=
z^{\delta}H_a(z)H_b(z)\prod_{i=1}^{\ell}H_{c_i}(z):
\label{Schi-Hilbert}
\end{equation}
symmetry in a block of $r$ variables contributes the factor $H_r(z)$, and alternation in a block of $c_i$ variables contributes $z^{\binom{c_i}{2}}H_{c_i}(z)$ by \eqref{alt-Hilbert}.

The elementary symmetric polynomials $e_1,\ldots,e_n$ form a regular sequence in $S$ of degrees $1,\ldots,n$, and each $e_j$ is $S_n$-invariant.
Hence the Koszul resolution of $\cH_n=S/(e_1,\ldots,e_n)$ is an exact sequence of graded $S_n$-modules, and since the functor $M\mapsto M^\chi$ is exact (the group $G_\lambda$ is finite and the characteristic is $0$), we obtain
\begin{equation}
\Hilb(\cH_n^\chi,z)
=
\prod_{j=1}^{n}(1-z^j)\,\Hilb(S^\chi,z)
=
z^{\delta}\prod_{j=1}^{n}(1-z^j)\,H_a(z)H_b(z)\prod_{i=1}^{\ell}H_{c_i}(z).
\label{Hchi-Hilbert}
\end{equation}
Comparing \eqref{Hchi-Hilbert} with Lemma \ref{mixed-bound} gives
\[
\dim K_n^\chi\le\dim(\cH_n)_{n-1}^{\chi}.
\]
By Theorem \ref{Molev-module}, $\dim(\cH_n)_{n-1}^{\chi}=\dim L_n^\chi$, hence
\[
\dim K_n^\chi\le\dim L_n^\chi .
\]
Since $L_n\subseteq K_n$, the reverse inequality holds, and therefore
\begin{equation}
K_n^\chi=L_n^\chi
\qquad\text{for every }\lambda\vdash n.
\label{equal-chi}
\end{equation}

Put $M=K_n/L_n$.
The category of finite-dimensional $S_n$-modules over $\bC$ is semisimple, and $M\mapsto M^\chi$ is exact; hence \eqref{equal-chi} gives $M^\chi=0$ for every $\lambda\vdash n$.
If $M\ne0$, choose an irreducible submodule $S^\lambda\subseteq M$; then $(S^\lambda)^\chi\ne0$ by Lemma \ref{detection}, so $M^\chi\ne0$, a contradiction.
Thus $M=0$ and $K_n=L_n$ over $\bC$.
By the reduction of Section 3, \eqref{L=K} holds over every field of characteristic $0$.
This completes the proof of Theorem \ref{main-thm}.
\qed

\subsection{The differential criterion}

\begin{theorem}
\label{differential-cor}
Let $P\in S_{n-1}$, $n\ge2$.
Then $P$ is the symbol of a multilinear Lie element of $\Nov\langle X\rangle$ if and only if
\begin{equation}
\cD_1P=0
\qquad\text{and}\qquad
\cD_2P=0,
\label{PDE}
\end{equation}
where $\cD_1,\cD_2$ are the operators \eqref{Dk}.
Explicitly, \eqref{PDE} is the system \eqref{PDE1-intro}, \eqref{PDE2-intro}.
\end{theorem}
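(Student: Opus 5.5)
This is an immediate combination of Theorem \ref{main-thm} with Lemma \ref{differential-criterion}. The symbol map \eqref{symbol-map} is a linear isomorphism $\Nov(n)\to S_{n-1}$, so every $P\in S_{n-1}$ is $P_f$ for a unique $f\in\Nov(n)$. By definition \eqref{Ln}, $P$ is the symbol of a multilinear Lie element exactly when $P\in L_n$. By Theorem \ref{main-thm}, valid over any field of characteristic $0$, this holds if and only if $P\in K_n$. By the equivalence (i)$\Leftrightarrow$(iv) of Lemma \ref{differential-criterion}, that is in turn equivalent to $\cD_1P=\cD_2P=0$, which is \eqref{PDE}.

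The only point needing care is the field. Lemma \ref{differential-criterion} was proved using generating functions and formal operator identities, namely the conjugation formula \eqref{conjugation} and the Witt relations \eqref{Witt-relations}. These are identities with rational coefficients, so the lemma holds over any field of characteristic $0$, not only over $\bC$. Alternatively, both $K_n$ and $\ker\cD_1\cap\ker\cD_2\subseteq S_{n-1}$ are kernels of linear maps with integer matrices in the monomial basis. Their equality over $\bC$ therefore descends to $\bQ$ and extends to any $\K$ of characteristic $0$, exactly as in the reduction following Theorem \ref{main-thm}.

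Finally, \eqref{PDE} is the system \eqref{PDE1-intro}, \eqref{PDE2-intro}. This follows by reading off \eqref{Dk} with $k=1$ and $k=2$:
\[
\cD_1=\sum_i\bigl(t_i\partial_i^2-2\partial_i\bigr),\qquad \cD_2=\sum_i\bigl(t_i\partial_i^3-3\partial_i^2\bigr).
\]
There is no genuine obstacle. All the substantive work, the sufficiency direction $K_n\subseteq L_n$ and the reduction of infinitely many equations $\cD_kP=0$ to two via $[\cD_1,\cD_k]=(1-k)\cD_{k+1}$, has already been done in Theorem \ref{main-thm} and Lemma \ref{differential-criterion}.
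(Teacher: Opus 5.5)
Your proposal is correct and follows the paper's proof exactly: Theorem~\ref{main-thm} gives $L_n=K_n$, and the equivalence (i)$\Leftrightarrow$(iv) of Lemma~\ref{differential-criterion} gives $K_n=\ker\cD_1\cap\ker\cD_2$. Your remark that Lemma~\ref{differential-criterion} holds over any field of characteristic $0$, because its proof uses only identities with rational coefficients (or by descent through integer matrices), is sound; the paper leaves this point implicit.
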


\begin{proof}
By Theorem \ref{main-thm}, $P\in L_n$ if and only if $P\in K_n$, and by Lemma \ref{differential-criterion}, $P\in K_n$ if and only if $\cD_1P=\cD_2P=0$.
\end{proof}

\begin{remark}
\label{finite-remark}
Theorem \ref{main-thm} and Theorem \ref{differential-cor} give two finite procedures for recognizing multilinear Lie elements: evaluation at the $\binom{2n-2}{n}$ points of $V_n$, or the computation of the two polynomials $\cD_1P$ and $\cD_2P$.
Neither procedure requires a basis of $L_n$.
The differential criterion is more economical: $\cD_1$ and $\cD_2$ act on the monomial basis by \eqref{Dk-monomial}, and the number of monomials of $S_{n-1}$ is $\binom{2n-2}{n-1}$, whereas $|V_n|=\binom{2n-2}{n}$ evaluations are needed for the vanishing criterion.
\end{remark}

\subsection{Lie elements are not total derivatives}
By \eqref{symbol-D}, multiplication of a symbol by $\sigma=t_1+\cdots+t_n$ corresponds to the derivation $D$ of $\K\{X\}$.

\begin{proposition}
\label{no-total-derivative}
For every $n\ge2$,
\[
L_n\cap\sigma S_{n-2}=0 .
\]
In particular, a nonzero multilinear Lie element of $\Nov\langle X\rangle$ is not a total derivative, i.e.\ it is not of the form $D(g)$ with $g\in\K\{X\}$.
\end{proposition}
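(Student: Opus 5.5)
The plan is to reduce the statement to the vanishing criterion of Theorem~\ref{main-thm} and then finish with the interpolation Lemma~\ref{simplex}. The key point is that the linear form $\sigma$ does not vanish anywhere on $V_n$. Indeed, for $a\in V_n$ we have $\sigma(a)=a_1+\cdots+a_n\ge2$.

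Let $P\in L_n\cap\sigma S_{n-2}$ and write $P=\sigma Q$ with $Q\in S_{n-2}$.
\begin{enumerate}
\item[(1)] By Theorem~\ref{main-thm} (the inclusion $L_n\subseteq K_n$ is enough here), $P(a)=0$ for every $a\in V_n$.
\item[(2)] Since $\sigma(a)\neq0$ on $V_n$ and $\K$ has characteristic $0$, this gives $Q(a)=0$ for every $a\in V_n$.
\item[(3)] By the description \eqref{Vn}, these are the points $a=1-b$ with $b\in\bZ_{\ge0}^n$ and $|b|\le n-2$. The polynomial $\widetilde Q(b)=Q(1-b_1,\ldots,1-b_n)$ has total degree at most $n-2$ in $b$. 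It vanishes at all $b\in\bZ_{\ge0}^n$ with $|b|\le n-2$.
\item[(4)] Lemma~\ref{simplex}, applied with $d=n-2$, gives $\widetilde Q=0$. Hence $Q=0$ and $P=0$.
\end{enumerate}
For $n=2$ this is consistent: $S_0$ consists of constants, and $V_2=\{(1,1)\}$ forces the constant to vanish.

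For the statement about total derivatives, suppose a multilinear Lie element $f\in\Nov(n)$ equals $D(g)$ with $g\in\K\{X\}$. The derivation $D$ preserves the multidegree in the families $x_i^{(\cdot)}$ and raises the total derivative order by one. Therefore we may replace $g$ by its component multilinear in $x_1,\ldots,x_n$ of total order $n-2$, and $D(g)$ is unchanged. This component lies in $\K\{X\}_{[n]}$, so its symbol $P_g$ belongs to $S_{n-2}$. By \eqref{symbol-D}, $P_f=\sigma P_g\in L_n\cap\sigma S_{n-2}=0$. Since the symbol map is injective on $\Nov(n)$, we get $f=0$.

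I expect no real obstacle. The only points needing care are that $\sigma$ is nonzero on all of $V_n$, and that the degree bound $n-2$ on $Q$ exactly matches the range $|b|\le n-2$ required in Lemma~\ref{simplex}. As a sanity check, the same conclusion can also be reached through the differential criterion of Theorem~\ref{differential-cor}, but the evaluation argument above is the most direct route.
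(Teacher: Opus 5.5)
Your proposal is correct and follows the paper's proof essentially line by line. The paper also uses $L_n\subseteq K_n$ from Theorem~\ref{main-thm}, divides by $\sigma(a)\ge 2$ on $V_n$, and applies Lemma~\ref{simplex} to $Q(1-b)$ with $d=n-2$. Your reduction of the total-derivative statement to the component of $g$ that is multilinear of total order $n-2$ makes explicit a step the paper leaves implicit.
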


\begin{proof}
Let $P=\sigma Q\in L_n$ with $Q\in S_{n-2}$.
By Theorem \ref{main-thm}, $P(a)=0$ for every $a\in V_n$.
Since $\sigma(a)=a_1+\cdots+a_n\ge2$ for $a\in V_n$, we get $Q(a)=0$ for all $a\in V_n$.
After the substitution $a=1-b$, the polynomial $Q(1-b)$ has degree at most $n-2$ and vanishes at every $b\in\bZ_{\ge0}^n$ with $|b|\le n-2$.
By Lemma \ref{simplex}, $Q=0$, hence $P=0$.
\end{proof}

The Hilbert series of the coinvariant algebra \eqref{coinv} is
\[
\Hilb(\cH_n,z)
=
\prod_{j=1}^{n}\frac{1-z^j}{1-z}
=
\prod_{j=1}^{n}(1+z+\cdots+z^{j-1}).
\]
Therefore Theorem \ref{Molev-module} gives the following formula.

\begin{corollary}
\label{dimension}
For every $n\ge1$,
\begin{equation}
\dim L_n
=
[z^{n-1}]\prod_{j=1}^{n}(1+z+\cdots+z^{j-1}).
\label{dim-formula}
\end{equation}
Equivalently, $\dim L_n$ is the number of permutations of $n$ letters having exactly $n-1$ inversions.
\end{corollary}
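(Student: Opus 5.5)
The plan is to derive the formula directly from Theorem \ref{Molev-module} together with the Hilbert series of the coinvariant algebra displayed just before the statement. For $n\ge2$, Theorem \ref{Molev-module} gives an $S_n$-isomorphism $L_n\simeq(\cH_n)_{n-1}$, so in particular $\dim L_n=\dim(\cH_n)_{n-1}$. We then need $\dim(\cH_n)_{n-1}=[z^{n-1}]\Hilb(\cH_n,z)$. The Hilbert series itself follows from the Koszul resolution argument already used in Section \ref{sec-proof}: since $e_1,\ldots,e_n$ is a regular sequence of degrees $1,\ldots,n$, we get $\Hilb(\cH_n,z)=\prod_{j=1}^n(1-z^j)\cdot(1-z)^{-n}$. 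This is exactly \eqref{Hchi-Hilbert} with trivial $G_\lambda$, or equivalently the Hilbert series of $S$ multiplied by $\prod(1-z^j)$. Rewriting $(1-z^j)/(1-z)=1+\cdots+z^{j-1}$ then yields \eqref{dim-formula}.

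Two small points need attention. The case $n=1$ must be checked by hand: $L_1$ is spanned by the symbol of $x_1$, i.e.\ $S_0=\K$, so it has dimension $1$, and the right side is $[z^0]1=1$. Independence of the field is already settled in Section 3, since $\dim_\K L_n$ does not depend on $\K$; hence working over $\bC$, as Molev's theorem is stated, suffices.

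For the reformulation via inversions, we use the classical fact that $\prod_{j=1}^n(1+z+\cdots+z^{j-1})=\sum_{\pi\in S_n}z^{\mathrm{inv}(\pi)}$. This is proved by building a permutation by inserting $n$ into a permutation of $n-1$ letters: the $j$ possible insertion positions of the largest letter create $0,\ldots,j-1$ new inversions, and induction on $n$ gives the product. Taking the coefficient of $z^{n-1}$ finishes the argument.

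Nothing here is a real obstacle. The only step requiring care is the justification of the Hilbert series of $\cH_n$, namely regularity of the sequence of elementary symmetric polynomials. That regularity is standard, since $S$ is a free module over $\K[e_1,\ldots,e_n]$, and it was already invoked in Section \ref{sec-proof}.
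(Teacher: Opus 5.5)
Your proposal is correct and follows the paper's argument: the corollary comes from Molev's isomorphism $L_n\simeq(\cH_n)_{n-1}$ together with the Hilbert series $\prod_{j=1}^{n}(1-z^j)/(1-z)$ of the coinvariant algebra, which the paper simply quotes and you derive from the Koszul resolution. Your extra remarks (the case $n=1$, independence of the field, and the insertion proof of $\sum_{\pi}z^{\mathrm{inv}(\pi)}=\prod_{j}(1+z+\cdots+z^{j-1})$) fill in details the paper leaves implicit. The only loose phrase is ``\eqref{Hchi-Hilbert} with trivial $G_\lambda$'', since the trivial group is not in general of the form $G_\lambda$; the same exactness argument with no group at all is what you actually use, and it suffices.
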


The first values are
\[
\dim L_n=1,\ 1,\ 2,\ 6,\ 20,\ 71,\ 259,\ 961
\qquad(n=1,\ldots,8).
\]
In particular $\dim L_n=(n-1)!$ for $n\le4$, while $\dim L_5=20<24=4!$; hence the Lie subalgebra $\Lie\langle X\rangle$ satisfies multilinear identities of degree $5$ which do not follow from the Jacobi identity, and it is not a free Lie algebra.

In the examples below we use the notation $x^{(r)}=D^r(x)$, so that $x_i^{(1)}=D(x_i)$, and we write $\sigma_A$ as in \eqref{sigma}.

\begin{example}[Degree $4$]\label{ex4}
Here $n=4$, the symbols are homogeneous cubic polynomials in
$t_1,t_2,t_3,t_4$, and
\[
V_4=\{1-b\mid b\in\mathbb Z_{\geq 0}^4,\ |b|\leq 2\},
\qquad
|V_4|=\binom{6}{4}=15.
\]

\textup{(a) Recognizing a Lie element.}
Consider the multilinear element
\[
\begin{aligned}
f={}&-x_1^{(3)}x_2x_3x_4
-x_1^{(2)}x_2^{(1)}x_3x_4
+x_1^{(2)}x_2x_3x_4^{(1)}
+x_1^{(1)}x_2^{(2)}x_3x_4\\
&+x_1^{(1)}x_2x_3^{(2)}x_4
-x_1^{(1)}x_2x_3^{(1)}x_4^{(1)}
+x_1x_2^{(3)}x_3x_4\\
&-x_1x_2^{(2)}x_3x_4^{(1)}
-x_1x_2^{(1)}x_3^{(2)}x_4
+x_1x_2^{(1)}x_3^{(1)}x_4^{(1)}.
\end{aligned}
\]
Its symbol is
\[
\begin{aligned}
P_f={}&-t_1^3-t_1^2t_2+t_1^2t_4+t_1t_2^2+t_1t_3^2-t_1t_3t_4\\
&+t_2^3-t_2^2t_4-t_2t_3^2+t_2t_3t_4.
\end{aligned}
\]
A direct computation gives
\[
D_1P_f=0,
\qquad
D_2P_f=0.
\]
Hence Theorem~\ref{differential-cor} shows that $f$ is a
Lie element.

In this example one can also identify the corresponding Lie
polynomial explicitly. Indeed,
\[
P_f
=(t_2-t_1)(t_3-t_1-t_2)(t_4-t_1-t_2-t_3).
\]
By \eqref{comb}, the right-hand side is the symbol of the left-normed
commutator
\[
[[[x_1,x_2],x_3],x_4].
\]
Since the symbol map is injective on the multilinear component,
\[
f=[[[x_1,x_2],x_3],x_4].
\]
Thus the criterion recognizes the Lie character of $f$ directly from
its expansion in the differential basis, without requiring a
presentation of $f$ as an iterated commutator.

Notice also that no individual differential monomial
$x^{(\alpha)}$, $|\alpha|=n-1$, is a Lie element for $n\geq2$.
Indeed, its symbol is $t^\alpha$, and
\[
t^\alpha(1,\ldots,1)=1,
\]
whereas $(1,\ldots,1)\in V_n.$

\medskip

\textup{(b) Both equations \eqref{PDE1-intro} and \eqref{PDE2-intro} are needed.}
Consider
\[
f=x_1^{(3)}x_2x_3x_4,
\qquad
P_f=t_1^3.
\]
By \eqref{Dk-monomial}, with $r=3$,
\[
D_1(t_1^3)
=t_1\cdot 6t_1-2\cdot 3t_1^2=0,
\]
whereas
\[
D_2(t_1^3)
=t_1\cdot 6-3\cdot 6t_1
=-12t_1\neq0.
\]
Thus $P_f$ satisfies \eqref{PDE1-intro} but not \eqref{PDE2-intro}, and $f$ is not a
Lie element. Hence the first differential equation alone is not
sufficient.

\medskip

\textup{(c) A Novikov product of commutators.}
Let
\[
u=[x_1,x_2],
\qquad
v=[x_3,x_4],
\qquad
f=u\circ v.
\]
By \eqref{symbol-circ} and \eqref{symbol-bracket},
\[
P_f=(t_2-t_1)(t_4-t_3)(t_3+t_4).
\]
Take
\[
a=(0,1,0,1)=1-(1,0,1,0)\in V_4.
\]
Then
\[
P_f(a)=1\cdot1\cdot1=1\neq0.
\]
Therefore $u\circ v$ is not a Lie element.

On the other hand,
\[
g=u\circ v-v\circ u=[u,v]
=[[x_1,x_2],[x_3,x_4]]
\]
has symbol
\[
P_g
=(t_2-t_1)(t_4-t_3)
 (t_3+t_4-t_1-t_2).
\]
We verify directly that $P_g$ vanishes on $V_4$. Let
$a=(a_1,a_2,a_3,a_4)\in V_4$. If $a_1=a_2$ or $a_3=a_4$, one of the
first two factors vanishes. Otherwise the pairs
$(a_1,a_2)$ and $(a_3,a_4)$ consist of distinct integers not exceeding
$1$. Hence
\[
a_1+a_2\leq1,
\qquad
a_3+a_4\leq1.
\]
Since
\[
a_1+a_2+a_3+a_4\geq2,
\]
both inequalities must be equalities:
\[
a_1+a_2=a_3+a_4=1.
\]
Thus the last factor also vanishes. Therefore
\[
P_g(a)=0
\qquad\text{for every }a\in V_4,
\]
and Theorem~\ref{main-intro} gives again that
\[
[[x_1,x_2],[x_3,x_4]]
\]
is a Lie element.
\end{example}

\begin{example}[degree $5$]
\label{ex5}
Here $n=5$, symbols are quartic forms, and $|V_5|=\binom{8}{5}=56$.

\smallskip
(a) \emph{Left-normed Novikov monomials span no Lie element.}
For $k\in[5]$ put
\[
y_k=x_k\prod_{j\ne k}x_j^{(1)}
=
(((x_k\circ x_{j_1})\circ x_{j_2})\circ x_{j_3})\circ x_{j_4},
\qquad
\{j_1,j_2,j_3,j_4\}=[5]\setminus\{k\}
\]
(the right-hand side does not depend on the order of $j_1,\ldots,j_4$ by \eqref{right-comm}).
Its symbol is $P_{y_k}=\prod_{j\ne k}t_j$.
Let $f=\sum_{k=1}^{5}c_ky_k$, $c_k\in\K$, so that
\[
P_f=\sum_{k=1}^{5}c_k\prod_{j\ne k}t_j .
\]
\emph{Vanishing criterion.}
Let $a^{(k)}$ be the point with $a^{(k)}_k=0$ and $a^{(k)}_j=1$ for $j\ne k$; it belongs to $V_5$ since $b=\epsilon_k$ and $|b|=1\le3$.
Then $P_f(a^{(k)})=c_k$.
Hence $f$ is a Lie element only if $c_1=\cdots=c_5=0$.

\emph{Differential criterion.}
Since $P_f$ is multilinear in $t_1,\ldots,t_5$, all second derivatives $\partial_i^2P_f$ vanish, so $\cD_2P_f=0$ automatically and
\[
\cD_1P_f=-2\sum_{i=1}^{5}\partial_iP_f
=
-2\sum_{i<k}(c_i+c_k)\prod_{j\notin\{i,k\}}t_j .
\]
Thus $\cD_1P_f=0$ if and only if $c_i+c_k=0$ for all $i\ne k$, which for $n\ge3$ forces $c_1=\cdots=c_5=0$.
Both criteria give the same answer: the elements $y_1,\ldots,y_5$ span no nonzero Lie element.
The same holds for the elements $x_k\prod_{j\ne k}x_j^{(1)}$ in every degree $n\ge3$.

\smallskip
(b) \emph{Determining a coefficient.}
Let $u=[[x_1,x_2],x_3]$, $v=[x_4,x_5]$ and consider the one-parameter family
\[
f_c=u\circ v+c\,v\circ u,
\qquad c\in\K .
\]
By \eqref{symbol-circ}, \eqref{comb},
\[
P_{f_c}
=
(t_2-t_1)(t_3-t_1-t_2)(t_5-t_4)\bigl(t_4+t_5+c\,(t_1+t_2+t_3)\bigr).
\]
Take the point $a=(0,1,0,0,1)=1-(1,0,1,1,0)\in V_5$ ($|b|=3$).
The factors take the values $1,\ -1,\ 1,\ 1+c$, so
\[
P_{f_c}(a)=-(1+c).
\]
By Theorem \ref{main-thm}, $f_c$ can be a Lie element only for $c=-1$; and indeed $f_{-1}=[u,v]=[[[x_1,x_2],x_3],[x_4,x_5]]$ is a Lie element.
Thus a single evaluation determines the only value of $c$ for which $f_c$ is a Lie element.
\end{example}

\begin{example}[degree $6$]
\label{ex6}
Here $n=6$, symbols are quintic forms, and $|V_6|=\binom{10}{6}=210$.

\smallskip
(a) \emph{A point of $V_n$ with a negative coordinate.}
Let $u=[x_1,x_2]$, $v=[[x_3,x_4],[x_5,x_6]]$ and $f=u\circ v$.
Then
\[
P_f
=
(t_2-t_1)\,\sigma_{\{3,4,5,6\}}\,(t_4-t_3)(t_6-t_5)(t_5+t_6-t_3-t_4).
\]
Any point $a\in V_6$ with $P_f(a)\ne0$ must satisfy $a_1\ne a_2$, $a_3\ne a_4$, $a_5\ne a_6$ and $a_3+a_4\ne a_5+a_6$; the points with all coordinates in $\{0,1\}$ do not satisfy these conditions simultaneously together with $a_1+\cdots+a_6\ge2$.
Take instead
\[
a=(0,1,1,0,1,-1)=1-(1,0,0,1,0,2)\in V_6
\qquad(|b|=4).
\]
The factors of $P_f$ take the values $1,\ 1,\ -1,\ -2,\ -1$, hence $P_f(a)=-2\ne0$, and $u\circ v$ is not a Lie element.
This example shows that the negative coordinates of the points of $V_n$ cannot be dispensed with.

\smallskip
(b) \emph{The second equation alone is not sufficient.}
Let $f=x_1^{(5)}x_2x_3x_4x_5x_6$, $P_f=t_1^5$.
By \eqref{Dk-monomial},
\[
\cD_1(t_1^5)=5\cdot(5-3)\,t_1^4=10t_1^4\ne0,
\qquad
\cD_2(t_1^5)=\frac{5!}{3!}\,(5-5)\,t_1^3=0 .
\]
Thus $P_f$ satisfies \eqref{PDE2-intro} but not \eqref{PDE1-intro}, and $f$ is not a Lie element.
Together with Example \ref{ex4}(b) this shows that the two equations of Theorem \ref{differential-cor} are independent.
More generally, \eqref{Dk-monomial} shows that $\cD_k$ annihilates $t_i^{2k+1}$ and no other power $t_i^r$ with $r\ge k$.
\end{example}

\begin{remark}
\label{product-remark}
Examples \ref{ex4}(c) and \ref{ex6}(a) are instances of the following general fact, which follows from Theorem \ref{main-thm}:
\emph{if $f$ and $g$ are nonzero multilinear Lie elements in disjoint sets of variables $A$ and $B$, $A\sqcup B=[n]$, then $f\circ g$ is not a Lie element.}
Indeed, put $p=|A|$.
The symbol $P_f$ is a homogeneous polynomial of degree $p-1$ in the variables $t_i$, $i\in A$, which vanishes on $V_p$ (for $p\ge2$) by Theorem \ref{main-thm}.
If $P_f$ also vanished at all integer points $a_A=1-b$ with $b\in\bZ_{\ge0}^A$, $|b|=p-1$, then $P_f(1-b)$ would vanish for all $b\in\bZ_{\ge0}^A$ with $|b|\le p-1$, and Lemma \ref{simplex} would give $P_f=0$.
Hence there is $a_A\in\bZ^A$ with $a_i\le1$, $\sum_{i\in A}a_i=1$ and $P_f(a_A)\ne0$ (for $p=1$ take $a_A=(1)$).
Similarly there is $a_B$ with $a_i\le1$, $\sum_{i\in B}a_i=1$ and $P_g(a_B)\ne0$.
The point $a=(a_A,a_B)$ belongs to $V_n$, and by \eqref{symbol-circ}
\[
P_{f\circ g}(a)=P_f(a_A)P_g(a_B)\,\sigma_B(a_B)=P_f(a_A)P_g(a_B)\ne0 .
\]
By Theorem \ref{main-thm}, $f\circ g\notin\Lie\langle X\rangle$.
In particular, for $n=2$ the element $x_1\circ x_2$ is not a Lie element, as expected.
\end{remark}

\end{document}